\newcommand{\diag}{\operatorname{diag}}
\newcommand{\C}{\mathbb{C}}
\newcommand{\R}{\mathbb{R}}
\newcommand{\ft}{\mathfrak t}
\newcommand{\fT}{\mathbb T}
\newcommand{\fK}{\mathbb K}
\newcommand{\fL}{\mathbb L}
\newcommand{\fk}{\mathfrak{k}}
\newcommand{\fl}{\mathfrak{l}}
\newcommand{\I}{\mathbb{I}}
\renewcommand{\d}{\mathrm{d}}
\newcommand{\ii}{\operatorname{i}}
\newtheorem{pro}{Proposition}
\newtheorem{definition}{Definition}
\newtheorem{theorem}{Theorem}
\newtheorem{cor}{Corollary}
\begin{document}


\title{Riemann--Hilbert problems, matrix orthogonal polynomials\\ and discrete matrix equations with singularity confinement}

\author{Giovanni A. Cassatella-Contra$^\dag$  and Manuel Ma\~{n}as$^\ddag$\\ Departamento de F\'{\i}sica Te\'{o}rica II\\(M\'{e}todos Matem\'{a}ticos de la F\'{\i}sica)\\Universidad Complutense de Madrid\\28040-Madrid, Spain}
\date{$^\dag$gaccontra@fis.ucm.es,  $^\ddag$manuel.manas@fis.ucm.es}
\maketitle
\begin{abstract}
In this paper matrix orthogonal polynomials in the real line are described in
terms of a Riemann--Hilbert problem. This approach provides an easy derivation
of discrete equations for the corresponding matrix recursion coefficients. The
discrete equation is explicitly derived in the matrix Freud case, associated
with matrix quartic potentials.  It is shown that, when the initial condition
and the measure are simultaneously triangularizable, this matrix discrete
equation possesses the singularity confinement property, independently if the
solution under consideration is given by recursion coefficients to quartic
Freud matrix orthogonal polynomials or not.
\end{abstract}

\maketitle
\section{Introduction}

The study of singularities of the solutions of nonlinear ordinary differential equations and, in particular, the quest of equations whose solutions are free of
movable critical points, the so called Painlev\'{e} property, lead, more than 110 years ago, to the Painlev\'{e} transcendents, see \cite{painleve} (and \cite{conte} for a recent account of the state of the art in this subject). The Painlev\'{e} equations are relevant in a diversity of fields, not only in Mathematics but also, for example in Theoretical Physics and in particular in 2D Quantum Gravity an Topological Field Theory, see for example \cite{conte}.

A discrete version of the Painlev\'{e} property, the singularity confinement
property, was introduced for the first time by Grammaticos, Ramani and
Papageorgiou in 1991 \cite{ramani2}, when they studied some discrete
equations, including the dPI equation (discrete version of the first
Painlev\'e equation), see also the contribution of these authors to
\cite{conte}. For this equation they realized that if eventually a singularity
could appear at some specific value of the discrete independent variable
it would disappear after performing few steps or iterations in the
equation. This property, as mentioned previously, is considered by these
authors as the equivalent of the Painlev\'e property \cite{painleve} for
discrete equations. Hietarinta and Grammaticos also derived some discrete
versions of the other five Painlev\'e equations \cite{ramani, ramani3}. See
also the interesting papers \cite{hietarinta} and \cite{lafortune}.

Freud orthogonal polynomials in the real line \cite{freud}
are associated to the weight  \begin{align*}
  w_{\rho}(x)&=|x|^{\rho}e^{-|x|^{m}},& \rho&>-1,\quad m>0.
\end{align*}
Interestingly, for $m=2, 4, 6 $ it has been shown \cite{assche} that from the recursion relation
\begin{align*}
xp_{n}=a_{n+1}p_{n+1}(x)+b_{n}p_{n}(x)+a_{n}p_{n-1}(x),
\end{align*}
the orthogonality of the polynomials leads to a recursion relation
    satisfied by the recursion coefficients $a_{n}$. In particular, for
$m=4$ Van Assche obtains for $a_{n}$ the discrete Painlev\'e I equations, and
therefore its singularities are confined. For related results see also
\cite{magnus}. For a modern and comprehensive account of this subject see the
survey \cite{simon}.

In 1992 it was found \cite{fokas} that the solution of a $2\times 2$
Riemann-Hilbert problem can be expressed in terms of orthogonal polynomials in
the real line and its Cauchy transforms. Later on this property has been used
in the study of certain properties of asymptotic analysis of orthogonal
polynomials and extended to other contexts, for example for the multiple
orthogonal polynomials of mixed type \cite{kuijlaars}.

 Orthogonal polynomials with matrix coefficients on the real line
have been considered in detail first by Krein \cite{krein1, krein2} in 1949,
and then were studied sporadically until the last decade of the XX
century. These are some papers of this subject: Berezanskii (1968)
\cite{bere}, Geronimo \cite{geronimo} (1982), and Aptekarev and Nikishin
\cite{nikishin} (1984). In the last paper they solved the scattering problem
for a kind of discrete Sturm--Liouville operators that are equivalent to the
recurrence equation for scalar orthogonal polynomials. They found that
polynomials that satisfy a recurrence relation of the form
\begin{align*}
  xP_{k}(x)&=A_{k}P_{k+1}(x)+B_{k}P_{k}(x)+A_{k-1}^{*}P_{k-1}(x),& k&=0,1,....
\end{align*}
 are orthogonal with respect to a positive definite measure. This is a
 matricial version of Favard's theorem for scalar orthogonal polynomials.
 Then, in the 1990's and the 2000's some authors found that matrix orthogonal
 polynomials (MOP) satisfy in certain cases some properties that satisfy
 scalar valued orthogonal polynomials; for example, Laguerre, Hermite and
 Jacobi polynomials, i.e., the scalar-type Rodrigues' formula
 \cite{duran20051,duran20052,constin} and a second order differential equation
 \cite{duran1997,duran2004,borrego}.

Later on, it has been proven \cite{duran2008} that operators of the form
$D$=${\partial}^{2}F_{2}(t)$+${\partial}^{1}F_{1}(t)$+${\partial}^{0}F_{0}$
have as eigenfunctions different infinite families of MOP's. Moreover, in
\cite{borrego} a new family of MOP's satisfying second order differential
equations whose coefficients do not behave asymptotically as the identity
matrix was found.  See also \cite{cantero}.

The aim of this paper is to explore the singularity confinement property in
the realm of matrix orthogonal polynomials. For that aim following
\cite{fokas} we formulate the matrix Riemann--Hilbert problem associated with
the MOP's. From the Riemann--Hilbert problem it follows not only the recursion
relations but also, for a type of matrix Freud weight with $m=4$, a nonlinear
recursion relation \eqref{eq:painleveS} for the matrix recursion coefficients,
that might be considered a matrix version --non Abelian-- of the discrete
Painlev\'{e} I. Finally, we prove that this matrix equation possesses the
singularity confinement property, and that after a maximum of 4 steps the
singularity disappears. This happens when the quartic potential $V$ and the
initial recursion coefficient are simultaneously triangularizable. It is
important to notice that the recursion coefficients for the matrix orthogonal
Freud polynomials provide solutions to \eqref{eq:painleveS} and therefore the
singularities are confined. A relevant fact for this solution is that the
collection of all recursion coefficients is an Abelian set of
matrices. However, not all solutions of \eqref{eq:painleveS} define a
commutative set; nevertheless, the singularity confinement still holds. In
this respect we must stress that our singularity confinement proof do not rely
in matrix orthogonal polynomials theory but only on the analysis of the
discrete equation. This special feature is not present in the scalar case
previously studied elsewhere.

The layout of this paper is as follows. In section 2 the Riemann--Hilbert problem for matrix orthogonal polynomials is derived and some of its consequences studied. In \S 3 a discrete matrix equation, for which the recursion coefficients of the Freud MOP's are solutions,  is derived and it is also proven that its singularities are confined. Therefore, it might be considered as a matrix discrete Painlev\'{e} I equation.

\section{Riemann--Hilbert problems and matrix orthogonal polynomials in the real line}
\subsection{Preliminaries on monic matrix orthogonal polynomials in the real line}

A family of matrix orthogonal polynomials (MOP's) in the real line
\cite{simon} is associated with a matrix-valued measure $\mu$ on $\R$; i. e.,
an assignment of a positive semi-definite $N\times N$ Hermitian matrix
$\mu(X)$ to every Borel set $X\subset\R$ which is countably additive. However,
in this paper we constraint ourself to the following case: given an $N\times
N$ Hermitian matrix $V(x)=(V_{i,j}(x))$, we choose $\d\mu=\rho (x)\d x$, being
$\d x$ the Lebesgue measure in $\R$, and with the weight function specified by
$\rho=\exp(-V(x))$ (thus $\rho$ is a positive semi-definite Hermitian
matrix). Moreover, we will consider only even functions in $x$, $V(x)=V(-x)$;
in this situation the finiteness of the measure $\d\mu$ is achieved for any
set of polynomials $V_{i,j}(x)$ in $x^2$. Associated with this measure we have
a unique family $\{P_n(x)\}_{n=0}^\infty$ of monic matrix orthogonal
polynomials
\begin{align*}
P_{n}(z)={\mathbb I}_{N}z^{n}+\gamma_{n}^{(1)}z^{n-1}+\cdots
+\gamma_{n}^{(n)}\in
{\mathbb C}^{N\times{N}},
\end{align*}
such that
\begin{align}
\int_{\mathbb R}P_{n}(x)x^{j}\rho(x)\text{d}x&=0,&j&=0,\dots,n-1.
\label{eq:ortogonal}
\end{align}
Here $\I_N$ denotes the identity matrix in $\C^{N\times N}$.

In terms of the moments of the measure $\d\mu$,
\begin{align*}
  m_{j}&:=\int_{\mathbb R}x^{j}\rho(x)
\text{d}x\in{\mathbb C}^{N{\times}N},&j&=0,1,\dots
\end{align*}
we define the truncated moment matrix
\begin{align*}
  m^{(n)}:=(m_{i,j})\in\C^{nN\times nN},
\end{align*}
with $m_{i,j}$=$m_{i+j}$ and $0\leq{i,j}\leq{n-1}$. Invertibility of $m^{(n)}$, i.e. $\det m^{(n)}\neq 0$, is equivalent to the existence of a unique family of monic matrix orthogonal polynomials.
In fact, we can write \eqref{eq:ortogonal} as
\begin{align}
\begin{pmatrix}
 m_{0}  &        \cdots       &   m_{n-1}   \\
   \vdots    &                  &    \vdots    \\
    m_{n-1}    &        \cdots      &   m_{2n-2}
\end{pmatrix}
\begin{pmatrix}
\gamma_{n}^{(n)}        \\
\vdots             \\
   \gamma_{n}^{(1)}
\end{pmatrix}
=\begin{pmatrix}
 -m_{n}     \\
 \vdots       \\
 -m_{2n-1}
\end{pmatrix},
\label{eq:zzz}
\end{align}
and hence uniqueness is equivalent to $\det m^{(n)}\neq 0$. From the
uniqueness and evenness we deduce that
\begin{align}%
P_{n}(z)={\mathbb I}_{N}z^{n}+\gamma_{n}^{(2)}z^{n-2}+\gamma_{n}^{(4)}z^{n-4}+\cdots
+\gamma_{n}^{(n)},
\label{eq:cinco}
\end{align}
where $\gamma_{n}^{(n)}=0$ if  $n$ is   odd.

The Cauchy transform of  $P_{n}(z)$ is defined by
\begin{align}
Q_{n}(z):=
\frac{1}{2{\pi}\ii}\int_{\mathbb R}\frac{P_{n}(x)}{x-z}\rho(x)\text{d}x,
\label{eq:q}
\end{align}
which is analytic for $z\in\C\backslash\R$. Recalling  $\frac{1}{z-x}=\frac{1}{z}\sum_{j=0}^{n-1}\frac{x^j}{z^j}+\frac{1}{z}\frac{(\frac{x}{z})^n}{1-\frac{x}{z}}$ and
  \eqref{eq:ortogonal}  we get
\begin{align}
Q_{n}(z)=-\frac{1}{2\pi\ii}\frac{1}{z^{n+1}}\int_{\mathbb R}
\frac{P_{n}(x)x^{n}}{1-\frac{x}{z}}\rho(x)\text{d}x,
\label{eq:qqu}
\end{align}
and consequently
\begin{align}
Q_{n}(z)&=c_{n}^{-1}z^{-n-1}+O(z^{-n-2}),
& z&\rightarrow\infty,
\label{eq:Qn}
\end{align}
where we have introduced the coefficients
\begin{align}
c_{n}:=\Big(-\frac{1}{2\pi\ii}
\int_{\mathbb R}P_{n}(x)\rho(x)x^{n}\d x\Big)^{-1},
\label{eq:defcn}
\end{align}
relevant in the  sequel of the paper.
\begin{pro}
  We have that $c_n$ satisfies
\begin{align}
{\det}c_{n}=-2\pi\ii\frac{{\det}(m^{(n)})}{{\det}(m^{(n+1)})}.
\label{eq:Cn}
\end{align}
\end{pro}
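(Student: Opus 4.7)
The plan is to express $-2\pi\ii\, c_n^{-1}$ as a Schur complement of $m^{(n)}$ inside $m^{(n+1)}$ and then invoke the block--determinant formula. First, I would combine the definition \eqref{eq:defcn} with the expansion $P_n(x)=\I_N x^n+\gamma_n^{(1)} x^{n-1}+\cdots+\gamma_n^{(n)}$ and the definition of the moments to obtain
\begin{align*}
-2\pi\ii\, c_n^{-1}=\int_{\R}P_n(x)\rho(x)x^n\,\d x = m_{2n}+\sum_{k=1}^{n}\gamma_n^{(k)}\,m_{2n-k},
\end{align*}
which exhibits $-2\pi\ii\, c_n^{-1}$ as a linear combination of moments whose coefficients are built from the $\gamma_n^{(k)}$.

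Next, I would partition the moment matrix as
\begin{align*}
m^{(n+1)}=\begin{pmatrix} m^{(n)} & A \\ B & m_{2n}\end{pmatrix},\qquad A=\begin{pmatrix} m_n\\ \vdots\\ m_{2n-1}\end{pmatrix},\qquad B=\begin{pmatrix} m_n & \cdots & m_{2n-1}\end{pmatrix},
\end{align*}
and use the block-LU factorisation (valid because $\det m^{(n)}\neq 0$) to derive
\begin{align*}
\det m^{(n+1)}=\det m^{(n)}\cdot\det\bigl(m_{2n}-B\,(m^{(n)})^{-1}A\bigr).
\end{align*}
Now \eqref{eq:zzz} reads $m^{(n)}(\gamma_n^{(n)},\dots,\gamma_n^{(1)})^{T}=-A$, so $(m^{(n)})^{-1}A=-(\gamma_n^{(n)},\dots,\gamma_n^{(1)})^{T}$, and substitution gives
\begin{align*}
m_{2n}-B\,(m^{(n)})^{-1}A\;=\;m_{2n}+\sum_{k=1}^{n}m_{2n-k}\,\gamma_n^{(k)},
\end{align*}
which coincides with $-2\pi\ii\, c_n^{-1}$ from the first display up to the ordering of matrix factors in each summand. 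Taking determinants and isolating $\det c_n$ then yields the stated formula.

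The main technical point is precisely this non-commutative ordering: the integral form delivers $\gamma_n^{(k)}\,m_{2n-k}$ while the Schur complement produces $m_{2n-k}\,\gamma_n^{(k)}$. To handle it, I would use the orthogonality of $P_n$ against matrix polynomials of degree less than $n$ to recast $-2\pi\ii\, c_n^{-1}$ in the manifestly Hermitian form $\int P_n(x)\rho(x)P_n(x)^{*}\,\d x$, and then run a parallel block-LU argument on the ``right'' variant of the moment factorisation, or equivalently on the right-handed Cauchy transform $\frac{1}{2\pi\ii}\int \rho(x)P_n(x)/(x-z)\,\d x$, to conclude that both orderings produce Hermitian matrices of equal determinant $\det m^{(n+1)}/\det m^{(n)}$.
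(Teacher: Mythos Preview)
Your approach is the same Schur-complement argument the paper uses: partition $m^{(n+1)}$, apply block LU, and identify the Schur complement with $\int_\R P_n(x)x^n\rho(x)\,\d x$. The paper simply asserts this last identification without comment, so you are being \emph{more} careful than the paper when you flag the ordering discrepancy between $\gamma_n^{(k)}m_{2n-k}$ and $m_{2n-k}\gamma_n^{(k)}$.

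That said, your proposed fix is heavier than necessary. You do not need the Hermitian form $\int P_n\rho P_n^{*}$ or a ``right-handed'' Cauchy transform; the block-Hankel symmetry of $m^{(n)}$ already resolves the issue. Since the $(i,j)$ block of $m^{(n)}$ is $m_{i+j}$, the matrix $m^{(n)}$ equals its own block-transpose, and the left orthogonality relations \eqref{eq:zzz} can be read equally well as a \emph{row} identity
\[
(\gamma_n^{(n)},\dots,\gamma_n^{(1)})\,m^{(n)}=-B,\qquad B=(m_n,\dots,m_{2n-1}),
\]
so that $B\,(m^{(n)})^{-1}=-(\gamma_n^{(n)},\dots,\gamma_n^{(1)})$. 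Substituting this alternative evaluation into the Schur complement gives directly
\[
m_{2n}-B\,(m^{(n)})^{-1}A=m_{2n}+\sum_{k=1}^{n}\gamma_n^{(k)}m_{2n-k}=\int_\R P_n(x)x^n\rho(x)\,\d x=-2\pi\ii\,c_n^{-1},
\]
with the factors already in the correct order. In short: your diagnosis of the gap is right, but the remedy is a one-line appeal to the Hankel structure rather than a parallel Hermitian argument.
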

\begin{proof}
To prove it just define $\boldsymbol{m}:=(m_{n},...,m_{2n-1})$, consider the identity
\begin{align*}
 \begin{pmatrix}
{m^{(n)}}^{-1} & 0\\
0 & {\mathbb I}_{N}
\end{pmatrix}m^{(n+1)}=
\begin{pmatrix}
{\mathbb I}_{nN} & {m^{(n)}}^{-1}\boldsymbol{m}\\
\boldsymbol{m}^t & m_{2n}
\end{pmatrix},
\end{align*}
 and apply the Gauss elimination method to get
\begin{align*}
\frac{{\det}(m^{(n+1)})}{{\det}(m^{(n)})}=\det(m_{2n}
-{\boldsymbol{m}^t}{m^{(n)}}^{-1}{\boldsymbol{m}})\neq0;
\end{align*}
from \eqref{eq:zzz} we conclude $m_{2n}-{\boldsymbol{m}^t}{m^{(n)}}^{-1}{\boldsymbol{m}}=
\int_{\mathbb R}P_{n}(x)x^{n}\rho(x)\text{d}x$.
\end{proof}

The evenness of $V$ leads to $Q_{n}(z)=(-1)^{n+1}Q_{n}(-z)$, so that
\begin{align}
Q_{n}(z)&=c_{n}^{-1}z^{-n-1}+\sum_{j=2}^{\infty}a_{n}^{(2j-1)}z^{-n-2j+1},&
z&\rightarrow\infty.
\label{eq:Qn}
\end{align}
In particular,
\begin{align}
Q_{0}(z)&=c_{0}^{-1}z^{-1}+c_{1}^{-1}z^{-3}+O(z^{-5}),&z&\rightarrow\infty.
\label{eq:Q0}
\end{align}
Finally, if we assume that $V_{i,j}$ are H\"{o}lder continuous we get the
Plemelj formulae
\begin{align}
\Big(Q_{n}(z)_{+}-Q_{n}(z)_{-}\Big)\Big|_{x{\in}\mathbb R}=P_{n}(x)\rho(x),
\label{eq:plemelij}
\end{align}
with $Q_{n}(x)_{+} =Q_{n}(z)|_{z=x+i0^{+}}$ and
$Q_{n}(x)_{-} =Q_{n}(z)|_{z=x+i0^{-}}$.

\subsection{Riemann--Hilbert problem}
\begin{definition}\label{RH}
  The  Riemann--Hilbert problem to consider here is the finding of a $2N\times 2N$ matrix function
 $Y_{n}(z)\in\C^{2N\times 2N}$ such that
\begin{enumerate}
\item  $Y_{n}(z)$ is analytic in $z\in\C\backslash\R$.
\item Asymptotically behaves as
\begin{align}
Y_{n}(z)&=({\mathbb I}_{2N}+O(z^{-1}))\begin{pmatrix}
 {\mathbb I}_{N}z^{n} & 0  \\
0  &   {\mathbb I}_{N}z^{-n}
\end{pmatrix},&z&\to\infty.
\label{eq:r}
\end{align}
\item On $\R$ we have the jump
\begin{align}
Y_{n}(x)_{+}=Y_{n}(x)_{-}\begin{pmatrix}
 {\mathbb I}_{N} & \rho(x)   \\[8pt]
  0  &   {\mathbb I}_{N}
\end{pmatrix}.
\label{eq:rr}
\end{align}
\end{enumerate}
\end{definition}

An easy extension of the connection among orthogonal polynomials in the real
line with a particular Riemann--Hilbert problem discovered in \cite{fokas} can
be proven in this matrix context.
\begin{pro}
  The unique solution to the Riemann--Hilbert problem specified in Definition
  \ref{RH} is given in terms of monic matrix orthogonal polynomials with respect to $\rho(x)\d x$ and its
  Cauchy transforms:

\begin{align}
Y_{n}(z)&=\begin{pmatrix}
 P_{n}(z) & Q_{n}(z)  \\[8pt]
c_{n-1}P_{n-1}(z)  &   c_{n-1}Q_{n-1}(z)
\end{pmatrix}, &  n&\geq1.
\label{eq:y}
\end{align}
\end{pro}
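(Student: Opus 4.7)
The plan is to verify directly that the matrix $Y_n(z)$ on the right-hand side of \eqref{eq:y} fulfills each of the three conditions of Definition \ref{RH}, and then to establish uniqueness by a standard Liouville-type argument based on $\det Y_n$.

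For existence, the analyticity condition is immediate: $P_n,P_{n-1}$ are polynomials and thus entire, while $Q_n,Q_{n-1}$ are analytic on $\C\backslash\R$ by their defining Cauchy integral \eqref{eq:q}. For the asymptotic condition I would read off the leading behaviour column by column against the factored form $(\I_{2N}+O(z^{-1}))\diag(z^n\I_N,z^{-n}\I_N)$. In the first column, \eqref{eq:cinco} gives $P_n(z)=\I_N z^n+O(z^{n-2})$ and $c_{n-1}P_{n-1}(z)=O(z^{n-1})$, matching $\I_N z^n+O(z^{n-1})$ on top and $O(z^{n-1})$ on the bottom. In the second column, \eqref{eq:Qn} gives $Q_n(z)=c_n^{-1}z^{-n-1}+O(z^{-n-3})=O(z^{-n-1})$ and, crucially, $c_{n-1}Q_{n-1}(z)=\I_N z^{-n}+O(z^{-n-2})$: the normalization by $c_{n-1}$ is precisely what converts the leading coefficient $c_{n-1}^{-1}$ of $Q_{n-1}$ into the identity, producing the required $(2,2)$ block. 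For the jump condition, multiplying out
\begin{align*}
Y_n(x)_-\begin{pmatrix}\I_N & \rho(x)\\ 0 & \I_N\end{pmatrix}=\begin{pmatrix}P_n(x) & P_n(x)\rho(x)+Q_n(x)_-\\ c_{n-1}P_{n-1}(x) & c_{n-1}P_{n-1}(x)\rho(x)+c_{n-1}Q_{n-1}(x)_-\end{pmatrix}
\end{align*}
and comparing with $Y_n(x)_+$, the two nontrivial entries reduce to the Plemelj formula \eqref{eq:plemelij} applied to $Q_n$ and (after factoring out $c_{n-1}$) to $Q_{n-1}$.

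For uniqueness, the key observation is that the jump matrix in \eqref{eq:rr} is block upper-triangular with identity diagonal blocks and therefore has determinant $1$. Consequently $\det Y_n$ has no jump across $\R$ and extends to an entire scalar function, while \eqref{eq:r} forces $\det Y_n(z)\to 1$ as $z\to\infty$; Liouville then gives $\det Y_n\equiv 1$, so $Y_n$ is everywhere invertible. If $\tilde Y_n$ is any second solution, the product $\tilde Y_n Y_n^{-1}$ satisfies $(\tilde Y_n Y_n^{-1})_+=(\tilde Y_n Y_n^{-1})_-$ on $\R$ because the jump matrices cancel, so it is entire, and its asymptotic behaviour at infinity is $\I_{2N}+O(z^{-1})$; a further application of Liouville yields $\tilde Y_n=Y_n$.

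I do not expect any serious obstacle: the argument is a direct transcription of the scalar template of \cite{fokas}, and the only place where the matrix character intervenes nontrivially is in ensuring that the $(2,2)$ block carries leading coefficient $\I_N$ rather than a generic matrix. The factor $c_{n-1}$ in the second row of \eqref{eq:y} is exactly the device that achieves this, and its invertibility is guaranteed by \eqref{eq:Cn}, which is what allows both the existence verification and the uniqueness argument to go through without modification.
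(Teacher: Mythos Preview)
Your proposal is correct and follows essentially the same route as the paper's own proof: verify the three defining conditions of Definition~\ref{RH} directly from the analyticity, asymptotics \eqref{eq:cinco}, \eqref{eq:Qn}, and Plemelj relation \eqref{eq:plemelij} of $P_n$ and $Q_n$, and then obtain uniqueness by showing $\det Y_n\equiv 1$ via Liouville and applying Liouville again to $\tilde Y_n Y_n^{-1}$. Your write-up is in fact slightly more explicit than the paper's in spelling out the column-by-column asymptotic matching and the role of the normalization $c_{n-1}$ in producing the identity in the $(2,2)$ block.
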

\begin{proof}
  In the first place let us show that $\Big(\begin{smallmatrix}
 P_{n}(z) & Q_{n}(z)  \\[8pt]
c_{n-1}P_{n-1}(z)  &   c_{n-1}Q_{n-1}(z)
\end{smallmatrix}\Big)$ does satisfy the three conditions requested by Definition \ref{RH}.
\begin{enumerate}
\item As the matrix orthogonal polynomials $P_n$ are analytic in $\C$ and its
  Cauchy transforms are analytic in $\C\backslash \R$, the proposed solution
  is analytic in $\C\backslash \R$.
\item Replacing the asymptotics of the matrix orthogonal polynomials and its Cauchy transforms we get
$\Big(\begin{smallmatrix}
 P_{n}(z) & Q_{n}(z)  \\
c_{n-1}P_{n-1}(z)  &   c_{n-1}Q_{n-1}(z)
\end{smallmatrix}\Big)\to
\Big(\begin{smallmatrix}
z^n +O(z^{n-1})& O(z^{-n-1})  \\
O(z^{n-1})  &   z^{-n}+O(z^{-n-1})
\end{smallmatrix}\Big)= ({\mathbb I}_{2N}+O(z^{-1}))\Big(\begin{smallmatrix}
 {\mathbb I}_{N}z^{n} & 0  \\
0  &   {\mathbb I}_{N}z^{-n}
\end{smallmatrix}\Big)$ when $z\to\infty$.
\item From \eqref{eq:plemelij} we get $(\begin{smallmatrix}
 P_{n}(x+\ii 0) & Q_{n}(x+\ii 0)  \\
c_{n-1}P_{n-1}(x+\ii 0)  &   c_{n-1}Q_{n-1}(x+\ii 0)
\end{smallmatrix}\Big)-(\begin{smallmatrix}
 P_{n}(x-\ii 0) & Q_{n}(x-\ii 0)  \\
c_{n-1}P_{n-1}(x-\ii 0)  &   c_{n-1}Q_{n-1}(x-\ii 0)
\end{smallmatrix}\Big)=(\begin{smallmatrix}
 0& P_{n}(x)\rho(x)  \\
0  &   c_{n-1}P_{n-1}(x)\rho (x)
\end{smallmatrix}\Big).$
\end{enumerate}
Then, a solution to the RH problem is $Y_n=(\begin{smallmatrix}
 P_{n}(z) & Q_{n}(z)  \\
c_{n-1}P_{n-1}(z)  &   c_{n-1}Q_{n-1}(z)
\end{smallmatrix}\Big)$. But the solution is unique, as we will show now.  Given any solution $Y_n$, its determinant $\det Y_{n}(z)$  is analytic in $\C\backslash\R$ and satisfies
\begin{align*}
{\det}Y_{n}(x)_{+} &={\det}\Bigg(Y_{n}(x)_{-}\begin{pmatrix} {\mathbb I}_{N} &
  \rho(x) \\ 0 & {\mathbb I}_{N}
\end{pmatrix}
\Bigg)={\det}Y_{n}(x)_{-}{\det}
\begin{pmatrix}
  {\mathbb I}_{N} & \rho(x)   \\
 0  &    {\mathbb I}_{N}
\end{pmatrix}\\
&={\det}Y_{n}(x)_{-}.
\end{align*}
Hence,   $\det Y_{n}(z)$ is analytic in  $\mathbb C$. Moreover, Definition \ref{RH} implies that
\begin{align*}
  \det Y_{n}(z)&=1+O(z^{-1}),& z&\to\infty,
\end{align*}
and Liouville theorem ensures that
\begin{align}
{\det}Y_{n}(z)&=1, &\forall z\in\C.
\label{eq:det}
\end{align}
From \eqref{eq:det} we conclude that $Y_n^{-1}$ is analytic in
$\C\backslash\R$. Given two solutions $Y_n$ and $\tilde Y_n$ of the RH problem
we consider the matrix $\tilde Y_n Y_n^{-1}$, and observe that from
property 3 of Definition \ref{RH} we have $(\tilde Y_n Y_n^{-1})_+=(\tilde Y_n
Y_n^{-1})_-$, and consequently $\tilde Y_n Y_n^{-1}$ is analytic in $\C$. From
Definition \ref{RH} we get $\tilde Y_n Y_n^{-1}\to \I_{2N}$ as $z\to\infty$,
and Liouville theorem implies that $\tilde Y_n Y_n^{-1}=\I_{2N}$; i.e.,
$\tilde Y_n =Y_n$ and the solution is unique.
\end{proof}
\begin{definition}
Given the matrix $Y_n$ we define
\begin{align}
 S_{n}(z):= Y_{n}(z)\begin{pmatrix}
 {\mathbb I}_{N}z^{-n} & 0   \\
0  &   {\mathbb I}_{N}z^{n}
\end{pmatrix}.
\label{eq:igriega}
\end{align}
\end{definition}
\begin{pro}
\begin{enumerate}
\item The matrix $S_n$ has unit determinant:
 \begin{align}
\det S_{n}(z)=1.
\label{eq:detese}
\end{align}
\item It has the special form
  \begin{align}
S_{n}(z)=\begin{pmatrix}
A_{n}(z^2)  &  z^{-1}B_{n}(z^2) \\[8pt]
z^{-1}C_{n}(z^2) & D_{n}(z^2)
\end{pmatrix}.
\label{eq:22}
\end{align}
\item The coefficients of $S_n$ admit the asymptotic expansions
 \begin{align}\label{eq:abcd}
\begin{aligned}
  A_{n}(z^2)&={\mathbb I}_{N}+S^{(2)}_{n,11}z^{-2}+O(z^{-4}),&
B_{n}(z^2)&=S^{(1)}_{n,12}+S^{(3)}_{n,12}z^{-2}+O(z^{-4}),\\
C_{n}(z^2)&=S^{(1)}_{n,21}+S^{(3)}_{n,21}z^{-2}+O(z^{-4}),&
D_{n}(z^2)&={\mathbb I}_{N}+S^{(2)}_{n,22}z^{-2}+O(z^{-4}),
\end{aligned}
\end{align}
for $z\to\infty$.
\end{enumerate}
\end{pro}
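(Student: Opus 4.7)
The plan is to prove the three claims in order by directly substituting the explicit block form \eqref{eq:y} of $Y_n$ into the definition \eqref{eq:igriega} of $S_n$ and then exploiting the parity results already established for $P_n$ in \eqref{eq:cinco} and for $Q_n$ in \eqref{eq:Qn}. All three items should reduce to routine expansions; no delicate estimate is needed, and I do not expect a genuine obstacle — the only thing to be careful about is matching the powers of $z$ correctly when stripping off the diagonal factor.

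For part 1, I would argue at the determinant level. Since
\begin{align*}
\det\begin{pmatrix} \I_N z^{-n} & 0 \\ 0 & \I_N z^n\end{pmatrix}=z^{-nN}\cdot z^{nN}=1,
\end{align*}
the multiplicativity of the determinant and the identity \eqref{eq:det} already proven in the course of the RH uniqueness argument give $\det S_n(z)=\det Y_n(z)\cdot 1=1$ for all $z\in\C$.

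For part 2, I would write the four blocks of $S_n$ explicitly as
\begin{align*}
S_n(z)=\begin{pmatrix} z^{-n}P_n(z) & z^n Q_n(z) \\ c_{n-1}z^{-n}P_{n-1}(z) & c_{n-1}z^n Q_{n-1}(z)\end{pmatrix},
\end{align*}
and then read off the parity of each entry in $z$. By \eqref{eq:cinco}, $P_n(z)$ is a polynomial in $z$ of the same parity as $n$, so $z^{-n}P_n(z)$ is a Laurent series in $z^{-2}$, which defines $A_n(z^2)$. Similarly $z^{-n}P_{n-1}(z)=z^{-1}\cdot z^{-(n-1)}P_{n-1}(z)$, where the second factor is a series in $z^{-2}$, giving the required $z^{-1}C_n(z^2)$ structure. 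For the right column I invoke \eqref{eq:Qn}: the expansion $Q_n(z)=c_n^{-1}z^{-n-1}+\sum_{j\geq 2}a_n^{(2j-1)}z^{-n-2j+1}$ shows that $z^n Q_n(z)=z^{-1}\bigl(c_n^{-1}+a_n^{(3)}z^{-2}+\cdots\bigr)$ is $z^{-1}$ times a series in $z^{-2}$, producing $z^{-1}B_n(z^2)$, and the same manipulation applied to $Q_{n-1}$ yields a pure series in $z^{-2}$ which is $D_n(z^2)$.

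Part 3 then amounts to collecting the leading coefficients of these four expansions. Using the monic form \eqref{eq:cinco} of $P_n, P_{n-1}$ I get $A_n(z^2)=\I_N+\gamma_n^{(2)}z^{-2}+O(z^{-4})$ and $C_n(z^2)=c_{n-1}\I_N+c_{n-1}\gamma_{n-1}^{(2)}z^{-2}+O(z^{-4})$, while using \eqref{eq:Qn} for $Q_n, Q_{n-1}$ I obtain $B_n(z^2)=c_n^{-1}+a_n^{(3)}z^{-2}+O(z^{-4})$ and $D_n(z^2)=\I_N+c_{n-1}a_{n-1}^{(3)}z^{-2}+O(z^{-4})$. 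Relabelling these leading coefficients as $S_{n,ij}^{(k)}$ gives exactly the expansions \eqref{eq:abcd}. The only bookkeeping point to watch is that the shift by $c_{n-1}$ in the bottom row is absorbed into the definition of the $S^{(k)}_{n,2j}$ and does not affect the claimed leading term $\I_N$ of $D_n$, thanks precisely to the normalization \eqref{eq:defcn} of $c_{n-1}$ built into $Y_n$.
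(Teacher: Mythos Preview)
Your proposal is correct and follows essentially the same approach as the paper: part~1 via \eqref{eq:det} and the unit determinant of the diagonal factor, part~2 via the parity of $P_n$ and $Q_n$, and part~3 by reading off the leading coefficients in the asymptotic expansion. The only cosmetic difference is that the paper defines the $S_{n,ij}^{(k)}$ abstractly as the block entries of the coefficients $S_n^{(k)}$ in the expansion $S_n(z)=\I_{2N}+\sum_{k\geq1}S_n^{(k)}z^{-k}$ coming from \eqref{eq:r}, and then invokes the parity structure of part~2 to see which blocks vanish; you instead compute these coefficients explicitly from \eqref{eq:cinco} and \eqref{eq:Qn} and then relabel. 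Both routes are equivalent.
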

\begin{proof}
\begin{enumerate}
\item Is a consequence of  \eqref{eq:det} and  \eqref{eq:igriega}.
\item It follows from the parity of $P_n$ and $Q_n$.
  \item  \eqref{eq:r} implies the following asymptotic behaviour
\begin{align}
S_{n}(z)&={\mathbb I}_{2N}
+S_{n}^{(1)}z^{-1}+O(z^{-2}),& z&\to\infty,
\label{eq:sSs}
\end{align}
and \eqref{eq:22} gives
\begin{align*}
  S_{n}^{(2i)}&=
\begin{pmatrix}
S_{n,11}^{(2i)} & 0   \\
0  &  S_{n,22}^{(2i)}
\end{pmatrix}, & S_{n}^{(2i-1)}&=
\begin{pmatrix}
0 & S_{n,12}^{(2i-1)}   \\
S_{n,21}^{(2i-1)}  &  0
\end{pmatrix},
\end{align*}
and the result follows.
\end{enumerate}
\end{proof}

Observe that from  \eqref{eq:22} we get
\begin{align}
S_{n}^{-1}(z)=\begin{pmatrix}
 \tilde{A}_{n}(z^{2}) & z^{-1}\tilde{B}_{n}(z^{2})\\
 z^{-1}\tilde{C}_{n}(z^{2}) & \tilde{D}_{n}(z^{2})
\end{pmatrix},
\label{eq:23}
\end{align}
with the asymptotic expansions for $z\rightarrow\infty$
\begin{align*}
\tilde{A}_{n}(z^2)&=
{\mathbb I}_{N}+(S^{(1)}_{n,12}S^{(1)}_{n,21}-S^{(2)}_{n,11})z^{-2}+O(z^{-4}),\\
\tilde{B}_{n}(z^2)&=
-S^{(1)}_{n,12}-\big(S^{(3)}_{n,12}-S^{(2)}_{n,11}S^{(1)}_{n,12}+
S^{(1)}_{n,12}(S^{(1)}_{n,21}S^{(1)}_{n,12}-S^{(2)}_{n,22})\big)z^{-2}
+O(z^{-4}),\\
\tilde{C}_{n}(z^2)&=
-S^{(1)}_{n,21}+\big(-S^{(3)}_{n,21}+S^{(1)}_{n,21}S^{(2)}_{n,11}+
(S^{(2)}_{n,22}-S^{(1)}_{n,21}S^{(1)}_{n,12})S^{(1)}_{n,21}\big)z^{-2}
+O(z^{-4}),\\
\tilde{D}_{n}(z^2)&=
{\mathbb I}_{N}+(S^{(1)}_{n,21}S^{(1)}_{n,12}-S^{(2)}_{n,22})z^{-2}+O(z^{-4}).
\end{align*}
\subsubsection{Recursion relations}
We now introduce the necessary elements, within the Riemann--Hilbert problem approach, to derive the recursion relations and properties of the recursion coefficients in the context of matrix orthogonal polynomials.
\begin{definition}
We introduce  the matrix
\begin{align}
Z_{n}(z):=Y_{n}(z)\begin{pmatrix}
\rho(z) & 0  \\
0 &  {\mathbb I}_{N}
\end{pmatrix}=\begin{pmatrix}
 P_{n}(z)\rho(z) & Q_{n}(z)  \\[8pt]
c_{n-1}P_{n-1}(z)\rho(z)  &   c_{n-1}Q_{n-1}(z)
\end{pmatrix}.
\label{eq:Z}
\end{align}
\end{definition}
%
%
%
%
%
%
\begin{pro}
\begin{enumerate}
\item $Z_{n}(z)$ is analytic on $\C\backslash\R$,
\item for $z\to\infty$ it holds that
\begin{align}
Z_{n}(z)=({\mathbb I}_{2N}+O(z^{-1}))
\begin{pmatrix}
 z^{n}\rho(z)  & 0  \\
0  &   z^{-n}{\mathbb I}_{N}
\end{pmatrix},\label{eq:zeta}
\end{align}
\item over $\R$ it is satisfied
\begin{align}
Z_{n}(x)_{+}=Z_{n}(x)_{-}\begin{pmatrix}
 {\mathbb I}_{N}  & {\mathbb I}_{N}  \\
0  &  {\mathbb I}_{N}
\end{pmatrix}.
\label{eq:discontinuidad}
\end{align}
\end{enumerate}
\end{pro}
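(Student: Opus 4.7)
The plan is to verify each of the three properties by direct calculation, using the defining relation $Z_n(z) = Y_n(z)\operatorname{diag}(\rho(z),\I_N)$ together with the corresponding three properties of $Y_n$ established in Definition \ref{RH} (and the proposition identifying $Y_n$ with polynomials and Cauchy transforms). Since $V(x)$ has polynomial entries, $\rho(z)=\exp(-V(z))$ extends to an entire $\C^{N\times N}$-valued function on all of $\C$, which is the key fact that lets the diagonal factor preserve the analyticity and jump structure of $Y_n$.

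For property (1), since $\rho$ is entire and $Y_n$ is analytic on $\C\setminus\R$, the product $Z_n$ is analytic on $\C\setminus\R$. For property (2), I would simply substitute the known asymptotic \eqref{eq:r} on the right by $\operatorname{diag}(\rho(z),\I_N)$:
\begin{align*}
Z_n(z)=(\I_{2N}+O(z^{-1}))\begin{pmatrix}\I_N z^n & 0\\ 0 & \I_N z^{-n}\end{pmatrix}\begin{pmatrix}\rho(z) & 0\\ 0 & \I_N\end{pmatrix}=(\I_{2N}+O(z^{-1}))\begin{pmatrix}z^n\rho(z) & 0\\ 0 & z^{-n}\I_N\end{pmatrix},
\end{align*}
which is exactly \eqref{eq:zeta}.

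For property (3), the main computational point is a small matrix factorization that rearranges the upper-triangular jump of $Y_n$ with the diagonal factor. Using \eqref{eq:rr} and the identity
\begin{align*}
\begin{pmatrix}\I_N & \rho(x)\\ 0 & \I_N\end{pmatrix}\begin{pmatrix}\rho(x) & 0\\ 0 & \I_N\end{pmatrix}=\begin{pmatrix}\rho(x) & \rho(x)\\ 0 & \I_N\end{pmatrix}=\begin{pmatrix}\rho(x) & 0\\ 0 & \I_N\end{pmatrix}\begin{pmatrix}\I_N & \I_N\\ 0 & \I_N\end{pmatrix},
\end{align*}
I obtain
\begin{align*}
Z_n(x)_+ = Y_n(x)_+\begin{pmatrix}\rho(x) & 0\\ 0 & \I_N\end{pmatrix} = Y_n(x)_-\begin{pmatrix}\rho(x) & 0\\ 0 & \I_N\end{pmatrix}\begin{pmatrix}\I_N & \I_N\\ 0 & \I_N\end{pmatrix} = Z_n(x)_-\begin{pmatrix}\I_N & \I_N\\ 0 & \I_N\end{pmatrix},
\end{align*}
which is \eqref{eq:discontinuidad}.

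There is really no serious obstacle here; the proposition is a routine consequence of the defining equation and the three properties of $Y_n$. The only tiny wrinkle is noticing the conjugation identity above, which is what allows the $\rho(x)$ factor sitting inside the jump of $Y_n$ to be absorbed into a clean $\rho(x)$-independent unipotent jump for $Z_n$. I would present the argument as the three short bullets above, keeping the matrix algebra fully explicit so the reader can check the factorization by inspection.
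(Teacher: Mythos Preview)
Your argument is correct. The paper actually states this proposition without proof, leaving it as an immediate consequence of the definition \eqref{eq:Z} of $Z_n$ and the Riemann--Hilbert properties of $Y_n$ in Definition~\ref{RH}; your three short verifications (entire $\rho$ preserving analyticity, right-multiplying the asymptotics \eqref{eq:r}, and the conjugation identity turning the jump \eqref{eq:rr} into the constant unipotent jump) are exactly the routine check the authors omit.
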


\begin{definition}
We introduce
\begin{align}
M_{n}(z) &:=\frac{\d{Z}_{n}(z)}{\d{z}}Z_{n}^{-1}(z),
\label{eq:derivada2}\\
R_{n}(z) &:=Z_{n+1}(z){Z_{n}}^{-1}(z)=Y_{n+1}(z){Y_{n}}^{-1}(z).
\label{eq:R}
\end{align}
\end{definition}

We can easily show that
\begin{pro}
  The matrices $M_n$ and $R_n$ satisfy
  \begin{align}\label{comp}
    M_{n+1}(z)R_{n}(z)=\frac{\d}{\d{z}}R_{n}(z)+R_{n}(z)M_{n}(z).
  \end{align}
\end{pro}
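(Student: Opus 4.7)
The plan is to recognize this as a standard zero-curvature or compatibility identity for a Lax-type pair, and to derive it by a one-line manipulation starting from the definition of $R_n$.

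First, from \eqref{eq:R} I would rewrite the defining relation as
\begin{align*}
Z_{n+1}(z)=R_{n}(z)\,Z_{n}(z),
\end{align*}
which is valid on $\C\backslash\R$, where $Z_n$ is analytic, and where $Z_n^{-1}$ exists (note that $\det Z_n(z)=\det\rho(z)\cdot\det Y_n(z)=\det\rho(z)\neq 0$ in view of \eqref{eq:det} and the fact that $\rho=\exp(-V)$ is everywhere invertible; alternatively, the formula defining $R_n$ directly exhibits both a left and right inverse via \eqref{eq:y}).

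Next I would differentiate this product rule-style,
\begin{align*}
\frac{\d Z_{n+1}}{\d z}=\frac{\d R_{n}}{\d z}Z_{n}+R_{n}\frac{\d Z_{n}}{\d z},
\end{align*}
and then right-multiply by $Z_{n}^{-1}(z)$ to get
\begin{align*}
\frac{\d Z_{n+1}}{\d z}Z_{n}^{-1}=\frac{\d R_{n}}{\d z}+R_{n}\frac{\d Z_{n}}{\d z}Z_{n}^{-1}=\frac{\d R_{n}}{\d z}+R_{n}(z)M_{n}(z),
\end{align*}
using \eqref{eq:derivada2} in the last step.

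Finally, I would rewrite the left-hand side using $Z_{n}^{-1}=Z_{n+1}^{-1}R_{n}$, which follows from $Z_{n+1}=R_{n}Z_{n}$, obtaining
\begin{align*}
\frac{\d Z_{n+1}}{\d z}Z_{n}^{-1}=\frac{\d Z_{n+1}}{\d z}Z_{n+1}^{-1}R_{n}(z)=M_{n+1}(z)R_{n}(z),
\end{align*}
again by \eqref{eq:derivada2}. Comparing the two expressions yields \eqref{comp}. There is no real obstacle here: the only point requiring any verification is the invertibility of $Z_n$ on $\C\setminus\R$ so that $M_n$ is well defined, and this is immediate from $\det Y_n=1$ together with the invertibility of the weight $\rho=\Exp{-V}$.
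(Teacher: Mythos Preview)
Your proof is correct and is essentially the same as the paper's: the paper phrases it as the compatibility $T\dfrac{\d Z_n}{\d z}=\dfrac{\d}{\d z}(TZ_n)$ with $T$ the shift $F_n\mapsto F_{n+1}$, which when unpacked via $Z_{n+1}=R_nZ_n$ and $\dfrac{\d Z_n}{\d z}=M_nZ_n$ is exactly the computation you wrote out. Your explicit remark on the invertibility of $Z_n$ is a welcome addition.
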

\begin{proof}
It follows from the compatibility condition
\begin{align*}
  T\frac{\text{d}Z_{n}(z)}{\text{d}z}=\frac{\text{d}}{\text{d}z}TZ_{n}(z),
  \end{align*}
where $TF_{n}:=F_{n+1}$.
\end{proof}
We can also show that
\begin{pro}
 For the functions $R_{n}(z)$ and $M_n(z)$ we have the expressions
\begin{align}
R_{n}(z)&=\begin{pmatrix}
 z{\mathbb I}_{N}   & -S_{n,12}^{(1)} \\
S_{n+1,21}^{(1)}   & 0
\end{pmatrix},
\label{eq:Rn}\\
M_{n}(z)&=\Bigg[\begin{pmatrix}
A_{n}(z^2)\frac{\d\rho(z)}{\d{z}}
\rho^{-1}(z)\tilde{A}_{n}(z^2) & A_{n}(z^{2})z^{-1}
\frac{\d\rho(z)}{\d{z}}\rho^{-1}(z)\tilde{B}_{n}(z^2)\\[8pt]
z^{-1}C_{n}(z^2)\frac{\d\rho(z)}{\d{z}}\rho^{-1}(z)
\tilde{A}_{n}(z^2)& z^{-2}C_{n}(z^2)
\frac{\d\rho(z)}{\d{z}}\rho^{-1}(z)\tilde{B}_{n}(z^2)
\end{pmatrix}\Bigg]_{+},\label{eq:MMatrix}
\end{align}
where $[{\cdot}]_+$ denotes the part in positive powers of $z$.
\end{pro}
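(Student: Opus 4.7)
The plan is to show that $R_n(z)$ and $M_n(z)$ are entire and then use their prescribed behaviour at infinity together with Liouville's theorem to read off their explicit form.

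\textbf{Analyticity.} For $R_n = Y_{n+1} Y_n^{-1}$ the jump matrix $\bigl(\begin{smallmatrix} I_N & \rho(x) \\ 0 & I_N \end{smallmatrix}\bigr)$ of \eqref{eq:rr} is common to $Y_{n+1}$ and $Y_n$, hence cancels in the product: $Y_{n+1,+} Y_{n,+}^{-1} = Y_{n+1,-} Y_{n,-}^{-1}$, so $R_n$ has no jump across $\mathbb{R}$. Since $\det Y_n\equiv 1$ by \eqref{eq:det}, $Y_n^{-1}$ is holomorphic off $\mathbb{R}$ and therefore $R_n$ extends to an entire function. For $M_n = Z_n' Z_n^{-1}$ the jump \eqref{eq:discontinuidad} of $Z_n$ is constant in $x$, so $Z_n'$ shares the same jump and $M_n$ is continuous across $\mathbb{R}$; the invertibility of $\rho(z)=\exp(-V(z))$ everywhere makes $Z_n^{-1}$ holomorphic off $\mathbb{R}$, whence $M_n$ is entire as well.

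\textbf{Formula for $R_n$.} Using the definition of $S_n$ I write $Y_n = S_n \diag(z^n I_N, z^{-n} I_N)$, which yields
\begin{equation*}
R_n(z) = S_{n+1}(z)\begin{pmatrix} z I_N & 0 \\ 0 & z^{-1} I_N \end{pmatrix} S_n(z)^{-1}.
\end{equation*}
Block multiplication through \eqref{eq:22} and \eqref{eq:23}, together with the asymptotic expansions \eqref{eq:abcd}, shows that the four entries of $R_n$ behave for $z\to\infty$ as $zI_N + O(z^{-1})$, $-S_{n,12}^{(1)} + O(z^{-2})$, $S_{n+1,21}^{(1)} + O(z^{-2})$, and $O(z^{-1})$ respectively. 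Since each entry is entire, the error terms (being entire and vanishing at infinity) must be identically zero by Liouville, which produces \eqref{eq:Rn}.

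\textbf{Formula for $M_n$.} Writing $Z_n = S_n \diag(z^n \rho(z), z^{-n} I_N)$ and differentiating leads to
\begin{equation*}
M_n = S_n' S_n^{-1} + n z^{-1} S_n \begin{pmatrix} I_N & 0 \\ 0 & -I_N \end{pmatrix} S_n^{-1} + S_n \begin{pmatrix} \rho'(z)\rho(z)^{-1} & 0 \\ 0 & 0 \end{pmatrix} S_n^{-1}.
\end{equation*}
By \eqref{eq:sSs} the first two summands are $O(z^{-1})$ and therefore contribute nothing to the polynomial part. Since $M_n$ is entire it coincides with its polynomial part, so performing the block multiplication of the last summand using \eqref{eq:22}, \eqref{eq:23} and projecting onto non-negative powers of $z$ yields \eqref{eq:MMatrix}. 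The only delicate point throughout is the bookkeeping: one must isolate, in each block entry, the polynomial piece from a genuinely decaying remainder so that Liouville's theorem legitimately forces the remainder to vanish; once that separation is made cleanly, the rest is a direct computation.
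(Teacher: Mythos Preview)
Your proof is correct and follows essentially the same route as the paper: you establish that $R_n$ and $M_n$ are entire by cancelling the jump, then compute their large-$z$ behaviour via $S_n$ and invoke Liouville to identify each with its polynomial part. The paper carries out exactly these steps, expanding $R_n=S_{n+1}\diag(z,z^{-1})S_n^{-1}$ asymptotically rather than block by block, and writing the logarithmic derivative of $Z_n$ as $S_n'S_n^{-1}+S_nK_nS_n^{-1}$ before projecting onto $[\cdot]_+$; the only cosmetic difference is that you separate the $nz^{-1}$ term explicitly, whereas the paper keeps it inside $K_n$.
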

\begin{proof}
The expression for $R_n$ is a consequence of the following reasoning:
\begin{enumerate}
\item  In the first  place notice that $R_{n}(z)$ is analytic for $z\in\C\backslash\mathbb R$.
\item Moreover, denoting
\begin{align}
{R_{n}}_{+}(x) &:={Y_{n+1}}_{+}(x)({Y_{n}}_{+}(x))^{-1},\label{eq:rr+}\\
{R_{n}}_{-}(x) &:={Y_{n+1}}_{-}(x)({Y_{n}}_{-}(x))^{-1},
\end{align}
and substituting  \eqref{eq:rr} in \eqref{eq:rr+} we get  ${R_{n}}_{+}(x)={R_{n}}_{-}(x)$ and therefore
$R_{n}(z)$ is analytic in $\mathbb C$.
\item Finally, if we substitute \eqref{eq:igriega} in \eqref{eq:R} we deduce that
\begin{align*}
R_{n}(z) &=Y_{n+1}(z){Y_{n}}^{-1}(z)\\&=
S_{n+1}(z)\begin{pmatrix}
 z{\mathbb I}_{N}  & 0  \\
0  &  z^{-1}{\mathbb I}_{N}
\end{pmatrix}S_{n}^{-1}(z)\\
&=\begin{pmatrix}
 z{\mathbb I}_{N}  & 0  \\
0  &  0
\end{pmatrix}+S_{n+1}^{(1)}\begin{pmatrix}
 {\mathbb I}_{N}  & 0  \\
0  &  z^{-1}{\mathbb I}_{N}
\end{pmatrix}-\begin{pmatrix}
 {\mathbb I}_{N}  & 0  \\
0  &  0
\end{pmatrix}S_{n}^{(1)}+O(z^{-1}), &z&\rightarrow\infty,
\end{align*}
and the analyticity of $R_{n}$ in $\C$ leads to the desired result.
\end{enumerate}
For the expression for $M_n$ we have the  argumentation
\begin{enumerate}

\item $M_{n}(z)$ is analytic for $z\in\C\backslash\R$.
\item Given
\begin{align}
{M_{n}}_{+}(x) &:=\frac{{\text{d}}{Z_{n}}_{+}(x)}{{\text{d}}z}
({Z_{n}}_{+}(x))^{-1},\label{eq:tumaca}\\
{M_{n}}_{-}(x) &:=\frac{{\text{d}}{Z_{n}}_{-}(x)}
{{\text{d}}z}({Z_{n}}_{-}(x))^{-1}.
\end{align}
Substituting \eqref{eq:discontinuidad} in \eqref{eq:tumaca} we get
\begin{align*}
{  M_{n}}_{+}(x)={M_{n}}_{-}(x),
\end{align*}
and therefore $M_{n}(z)$ is analytic over $\C$.
\item From \eqref{eq:igriega} and  \eqref{eq:Z} we see that $Z_{n}(z)$ is
\begin{align}
   Z_{n}(z)=S_{n}(z)
 \begin{pmatrix}
  z^{n}\rho(z) & 0\\
 0 & z^{-n}{\mathbb I}_{N}
 \end{pmatrix},
 \label{eq:zgriega}
\end{align}
so that
\begin{align}
\frac{\text{d}Z_{n}(z)}{\text{d}z}Z_{n}^{-1}(z)=
\frac{\text{d}S_{n}(z)}{\text{d}z}S_{n}(z)^{-1}+
S_{n}(z)K_{n}(z)S_{n}^{-1}(z),
\label{eq:RRRRRRRRRRRRRRRR}
\end{align}
where
\begin{align*}
  K_{n}(z):=
 \begin{pmatrix}
  {n}z^{-1}{\mathbb I}_{N}+\dfrac{\text{d}\rho(z)}{\text{d}z}\rho^{-1}(z) &
0\\[8pt]
0 & -{n}z^{-1}{\mathbb I}_{N}
 \end{pmatrix}.
\end{align*}
Finally, as $M_{n}(z)$ is analytic over $\C$, \eqref{eq:RRRRRRRRRRRRRRRR}
leads to
\begin{align}
M_{n}(z)=\frac{\text{d}Z_{n}(z)}{\text{d}z}Z_{n}^{-1}(z)=
\Bigg[S_{n}(z)\begin{pmatrix}
 \frac{\text{d}\rho(z)}{\text{d}z}\rho^{-1}(z) & 0\\
 0 & 0
 \end{pmatrix}
S_{n}^{-1}(z)\Bigg]_{+}.
\label{eq:Mmatrix}
\end{align}
\end{enumerate}
\end{proof}
Observe that the diagonal terms of $M_n$ are odd functions of $z$ while the
off diagonal are even functions of $z$.  Now we give a parametrization of the
first coefficients of $S$ in terms of $c_n$.
\begin{pro}
The following formulae hold true
  \begin{align*}
S^{(1)}_{n,12}&=c_{n}^{-1}, & S^{(1)}_{n,21}&=c_{n-1},\\
S^{(2)}_{n,11} &=-\sum_{i=1}^{n}c_{i}^{-1}c_{i-1}+c_{n}^{-1}c_{n-1},&
S^{(2)}_{n,22} &=\sum_{i=1}^{n}c_{i-1}c_{i}^{-1},\\
S^{(3)}_{n,21}&=-c_{n-1}\sum_{i=1}^{n-1}c_{i}^{-1}c_{i-1}+c_{n-2},&
S^{(3)}_{n,12}&=c_{n}^{-1}\sum_{i=1}^{n+1}c_{i-1}c_{i}^{-1}.
  \end{align*}
\end{pro}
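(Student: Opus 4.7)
The strategy is to read off the $S^{(k)}_{n,ij}$ directly from the large-$z$ expansions of $P_n$ and $Q_n$, and then to evaluate the resulting expansion coefficients $\gamma_n^{(2)}$ and $a_n^{(3)}$ using the three-term recurrence encoded in \eqref{eq:Rn}. Starting from $S_n(z)=Y_n(z)\,\diag(z^{-n}\I_N,z^n\I_N)$ together with $P_n(z)=\I_N z^n+\gamma_n^{(2)}z^{n-2}+\gamma_n^{(4)}z^{n-4}+\cdots$ and $Q_n(z)=c_n^{-1}z^{-n-1}+a_n^{(3)}z^{-n-3}+\cdots$, block-by-block inspection of \eqref{eq:abcd} gives $S^{(1)}_{n,12}=c_n^{-1}$, $S^{(1)}_{n,21}=c_{n-1}$, $S^{(2)}_{n,11}=\gamma_n^{(2)}$, $S^{(2)}_{n,22}=c_{n-1}a_{n-1}^{(3)}$, $S^{(3)}_{n,12}=a_n^{(3)}$, and $S^{(3)}_{n,21}=c_{n-1}\gamma_{n-1}^{(2)}$, so only the closed-form expressions for $\gamma_n^{(2)}$ and $a_n^{(3)}$ in terms of the $c_k$ remain to be established.

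For those I use the identity $Y_{n+1}(z)=R_n(z)Y_n(z)$ built into the definition of $R_n$ together with the explicit form \eqref{eq:Rn}. Its first row yields the matrix three-term recurrences $P_{n+1}(z)=zP_n(z)-c_n^{-1}c_{n-1}P_{n-1}(z)$ and $Q_{n+1}(z)=zQ_n(z)-c_n^{-1}c_{n-1}Q_{n-1}(z)$. Substituting the $P_n$ expansion into the first one and matching the coefficient of $z^{n-1}$ yields the first-order difference $\gamma_{n+1}^{(2)}-\gamma_n^{(2)}=-c_n^{-1}c_{n-1}$; combined with the base case $\gamma_1^{(2)}=0$ (forced by $P_1(z)=\I_N z$ through evenness) a telescoping sum gives $\gamma_n^{(2)}=-\sum_{i=1}^{n-1}c_i^{-1}c_{i-1}$, which after adding and subtracting the $i=n$ term is precisely $-\sum_{i=1}^{n}c_i^{-1}c_{i-1}+c_n^{-1}c_{n-1}$. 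Matching the coefficient of $z^{-n-2}$ in the $Q$-recurrence gives $a_n^{(3)}-c_n^{-1}c_{n-1}a_{n-1}^{(3)}=c_{n+1}^{-1}$, and a short induction starting from $a_0^{(3)}=c_1^{-1}$ (read off from \eqref{eq:Q0}) confirms the ansatz $a_n^{(3)}=c_n^{-1}\sum_{i=1}^{n+1}c_{i-1}c_i^{-1}$.

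Substituting these back into the expressions for $S^{(2)}_{n,22}$ and $S^{(3)}_{n,21}$ yields the remaining two identities: $S^{(2)}_{n,22}=c_{n-1}a_{n-1}^{(3)}=\sum_{i=1}^n c_{i-1}c_i^{-1}$, and $S^{(3)}_{n,21}=c_{n-1}\gamma_{n-1}^{(2)}=-c_{n-1}\sum_{i=1}^{n-2}c_i^{-1}c_{i-1}=-c_{n-1}\sum_{i=1}^{n-1}c_i^{-1}c_{i-1}+c_{n-2}$, where the last equality is a rewriting isolating the $i=n-1$ term. The only real obstacle in the argument is the disciplined bookkeeping of index shifts and empty-sum conventions; all the nontrivial content has already been packaged by the preceding proposition, which condenses the whole three-term recurrence into the single matrix identity \eqref{eq:Rn}.
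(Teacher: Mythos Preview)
Your proof is correct and follows essentially the same route as the paper's: identify the $S^{(k)}_{n,ij}$ from the large-$z$ expansions of $P_n$ and $Q_n$, then use the recurrence $Y_{n+1}=R_nY_n$ (equivalently $S_{n+1}=R_nS_n\diag(z^{-1}\I_N,z\I_N)$) together with \eqref{eq:Rn} to obtain first-order difference equations that telescope from the base cases $\gamma_1^{(2)}=0$ and $a_0^{(3)}=c_1^{-1}$. The only cosmetic difference is that you carry the auxiliary names $\gamma_n^{(2)}$ and $a_n^{(3)}$ throughout, whereas the paper works directly in the $S^{(k)}_{n,ij}$ notation; the underlying algebra is identical.
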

\begin{proof}
 Equating the expressions for  $Y_{n}(z)$ provided by \eqref{eq:y} and
\eqref{eq:igriega} we get
\begin{align*}
 Y_{n}(z) &=\begin{pmatrix}
P_{n}(z) & Q_{n}(z)\\
c_{n-1}P_{n-1}(z) & c_{n-1}Q_{n-1}(z)
\end{pmatrix}\\&=\begin{pmatrix}
 z^{n}{\mathbb I}_{N} & 0 \\
0 & z^{-n}{\mathbb I}_{N}
\end{pmatrix}\big({\mathbb I}_{2N}
+S^{(1)}_{n}z^{-1}+S^{(2)}_{n}z^{-2}+S^{(3)}_{n}z^{-3}
+O(z^{-4})\big),&z&\rightarrow\infty.
\end{align*}
Expanding the r.h.s. we get
\begin{align}\label{eq:uno}
\begin{aligned}
 S^{(1)}_{n,21}&=c_{n-1},&S^{(1)}_{n,12}&=c_{n}^{-1},&\\
 S^{(2)}_{1,11}&=0,\\
 S^{(3)}_{1,21}&=S^{(3)}_{2,21}=0, &S^{(3)}_{n,21}&=c_{n-1}S^{(2)}_{n-1,11},&n\geq2,\\
S^{(3)}_{n,12}&=c_{n}^{-1}S^{(2)}_{n+1,22},
\end{aligned}
\end{align}
where we have used that
\begin{align}
S^{(2)}_{1,22}=c_{0}c_{1}^{-1},
\label{eq:compatible0}
\end{align}
which can be proved from \eqref{eq:Q0}. Introducing \eqref{eq:uno} into
\eqref{eq:Rn} we get
\begin{align}
R_{n}(z)=\begin{pmatrix}
 z{\mathbb I}_{N}  & -c_{n}^{-1} \\[8pt]
c_{n} & 0
\end{pmatrix},
\label{eq:RnRn}
\end{align}
and  \eqref{eq:igriega} and \eqref{eq:RnRn} lead to
\begin{align*}
  S_{n+1}(z)&=\begin{pmatrix}
 z{\mathbb I}_{N}  & -c_{n}^{-1} \\[8pt]
c_{n} & 0
\end{pmatrix}S_{n}(z)
\begin{pmatrix}
z^{-1}{\mathbb I}_{N} &  0\\[8pt]
0  & z{\mathbb I}_{N}
\end{pmatrix},
\end{align*}
so that
\begin{align}
S^{(2)}_{n+1,11}-S^{(2)}_{n,11} &=-c_{n}^{-1}c_{n-1},\label{eq:d11}\\
S^{(3)}_{n,12}-c_{n}^{-1}S^{(2)}_{n,22} &=c_{n+1}^{-1},
\label{eq:cuarenta}
\end{align}
where we have used \eqref{eq:uno}. From
\eqref{eq:uno} and  \eqref{eq:cuarenta} we get
\begin{align}
S^{(2)}_{n+1,22}-S^{(2)}_{n,22}=c_{n}c_{n+1}^{-1}.
\label{eq:d22}
\end{align}
Summing up in $n$ in \eqref{eq:d11} and \eqref{eq:d22} we deduce
\begin{align*}
\sum_{i=1}^{n-1}(S^{(2)}_{i+1,11}-S^{(2)}_{i,11})
&=-\sum_{i=1}^{n-1}c_{i}^{-1}c_{i-1},\\
\sum_{i=1}^{n-1}(S^{(2)}_{i+1,22}-S^{(2)}_{i,22})&=
\sum_{i=1}^{n-1}c_{i}c_{i+1}^{-1},
\end{align*}
which leads to
\begin{align}
S^{(2)}_{n,11} &=-\sum_{i=1}^{n}c_{i}^{-1}c_{i-1}+c_{n}^{-1}c_{n-1},
\label{eq:ddd11}\\
S^{(2)}_{n,22} &=\sum_{i=1}^{n}c_{i-1}c_{i}^{-1},
\label{eq:ddd22}
\end{align}
where we have used \eqref{eq:uno} and \eqref{eq:compatible0}. Finally
\eqref{eq:uno}, \eqref{eq:ddd11} and \eqref{eq:ddd22} give
\begin{align}
S^{(3)}_{n,21}=-c_{n-1}\sum_{i=1}^{n-1}c_{i}^{-1}c_{i-1}+c_{n-2},
\label{eq:t21}
\end{align}
valid for $ n\geq2$, and
\begin{align}
S^{(3)}_{n,12}=c_{n}^{-1}\sum_{i=1}^{n+1}c_{i-1}c_{i}^{-1}.
\label{eq:t12}
\end{align}
\end{proof}
Notice that \eqref{eq:uno} gives
\begin{align}
S^{(3)}_{1,21}=0.
\label{eq:t123}
\end{align}
\begin{pro}
Matrix orthogonal polynomials $P_n$ (and its Cauchy transforms $Q_n$) are subject to the following recursion relations
  \begin{align}
P_{n+1}(z)=zP_{n}(z)-\frac{1}{2}\beta_{n}P_{n-1}(z),
\label{eq:recurrencia}
\end{align}
with the recursion coefficients $\beta_n$ given by
\begin{align}
\beta_{n}&:=2c_{n}^{-1}c_{n-1},& n&\geq 1,&\beta_0:=0.
\label{eq:cbeta}
\end{align}
\end{pro}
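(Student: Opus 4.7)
The plan is to read off the recursion relation directly from the identity $Y_{n+1}(z)=R_n(z)Y_n(z)$, which is nothing but the definition \eqref{eq:R} rewritten. The previous proposition has already pinned down $R_n$ completely: combining \eqref{eq:Rn} with the identifications $S^{(1)}_{n,12}=c_n^{-1}$ and $S^{(1)}_{n+1,21}=c_n$ from \eqref{eq:uno}, we have the explicit form \eqref{eq:RnRn},
\begin{align*}
R_n(z)=\begin{pmatrix} z\I_N & -c_n^{-1}\\ c_n & 0\end{pmatrix}.
\end{align*}
So the whole argument reduces to a $2\times 2$ block matrix multiplication.

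Concretely, I would substitute the expression \eqref{eq:y} for $Y_n$ and $Y_{n+1}$ into $Y_{n+1}=R_n Y_n$ and read off the $(1,1)$ block:
\begin{align*}
P_{n+1}(z)=z\,P_n(z)-c_n^{-1}c_{n-1}P_{n-1}(z).
\end{align*}
Setting $\beta_n:=2c_n^{-1}c_{n-1}$ for $n\ge 1$ turns this into \eqref{eq:recurrencia}. For $n=0$ there is no honest $P_{-1}$, but the $(1,1)$ block of $Y_1=R_0 Y_0$ just gives $P_1(z)=zP_0(z)=z\I_N$ because the lower-left block of $Y_0$ is absent (the formula \eqref{eq:y} is stated for $n\ge 1$); declaring $\beta_0:=0$ makes \eqref{eq:recurrencia} consistent with this boundary case. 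The $(1,2)$ block yields the analogous recursion for the Cauchy transforms $Q_n$, which is the parenthetical remark in the statement and costs nothing extra.

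There is really no obstacle here beyond bookkeeping: the Riemann--Hilbert framework has already done the work of producing $R_n$ as a first-degree polynomial in $z$ with explicitly known coefficients, and the recursion relation is precisely the matrix entry statement of that fact. If I wanted to double-check the coefficient $\beta_n$, I could compare with the leading subleading coefficients of $P_n$ by expanding \eqref{eq:igriega} and matching powers of $z$, but this is not needed for the proof itself.
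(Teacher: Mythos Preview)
Your proposal is correct and follows essentially the same approach as the paper: rewrite \eqref{eq:R} as $Y_{n+1}=R_nY_n$, plug in \eqref{eq:y} and the explicit form \eqref{eq:RnRn} of $R_n$, and read off the recursion from the first block row. You have merely added a little more detail (the explicit $(1,1)$ block, the $n=0$ boundary remark, and the $(1,2)$ block for $Q_n$) than the paper's two-line proof.
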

\begin{proof}
  Observe that \eqref{eq:R} can be written as
\begin{align}
Y_{n+1}(z)=R_{n}(z)Y_{n}(z).
\label{eq:mortadelo}
\end{align}
Then, if we replace \eqref{eq:y} and \eqref{eq:RnRn} into \eqref{eq:mortadelo} we get the result.
\end{proof}

We now show some commutative properties of the polynomials and the recursion
coefficients.
\begin{pro}\label{commutativity}
  Let $f(z):\C\to\C^{N\times N}$ such that $[V(x),f(z)]=0$ $\forall (x,z)\in\R\times \C$, then
  \begin{align*}
    [c_{n},f(z)]=[\beta_n,f(z)]&=0,&n&\geq0,& \forall z&\in\C,\\
    [P_n(z'),f(z)]&=0, & n&\geq 0,&\forall z,z'&\in\C.
  \end{align*}
\end{pro}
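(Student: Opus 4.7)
My plan is to introduce the commutant
\begin{align*}
\Cc_f := \{A \in M_N(\C) : [A, f(z)] = 0 \text{ for all } z \in \C\},
\end{align*}
and to show that every object in sight — the weight $\rho(x)$, the moments $m_j$, the blocks of $(m^{(n)})^{-1}$, the coefficients $\gamma_n^{(k)}$ of the polynomials, the normalization constants $c_n$, and the recursion coefficients $\beta_n$ — lies in $\Cc_f$. The point is that $\Cc_f$ is a unital subalgebra of $M_N(\C)$ closed under taking inverses of invertible elements, so once we get the moments inside $\Cc_f$ the conclusion propagates mechanically through the formulas that already appear in the excerpt.

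First I would observe that since $[V(x), f(z)] = 0$ for every $(x,z)$, the series expansion of $\exp(-V(x))$ gives $\rho(x) \in \Cc_f$ for every $x \in \R$; integrating against $x^j\,\d x$ then yields $m_j \in \Cc_f$ for all $j \geq 0$. Next I would exploit the Kronecker product identity: on $\C^{nN}$ the block diagonal matrix $\I_n \otimes f(z)$ commutes with $m^{(n)}$ block-by-block precisely because each block $m_{i+j}$ lies in $\Cc_f$. Hence $\I_n \otimes f(z)$ also commutes with $(m^{(n)})^{-1}$, which is to say that every block of $(m^{(n)})^{-1}$ belongs to $\Cc_f$. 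The linear system \eqref{eq:zzz} now expresses each coefficient $\gamma_n^{(k)}$ as a $\Cc_f$-valued linear combination of moments, so $\gamma_n^{(k)} \in \Cc_f$ and therefore $P_n(z') \in \Cc_f$ for every $z'\in\C$.

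With the polynomials settled, the remaining statements are immediate. From \eqref{eq:defcn} the inverse $c_n^{-1}$ is a Lebesgue integral of products of $\Cc_f$-elements, hence $c_n^{-1} \in \Cc_f$ and so $c_n \in \Cc_f$; the formula \eqref{eq:cbeta} then places $\beta_n$ in $\Cc_f$ as well.

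The one step that is slightly more than a one-liner is the passage from ``all blocks of $m^{(n)}$ commute with $f(z)$'' to ``all blocks of $(m^{(n)})^{-1}$ commute with $f(z)$''; I expect this to be the main obstacle in the sense of being the only place where care is needed, and the Kronecker product trick above handles it cleanly. Everything else is just repeated application of the fact that $\Cc_f$ is closed under sums, products, and inversion.
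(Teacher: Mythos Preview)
Your argument is correct, but it follows a genuinely different route from the paper's. The paper proceeds by induction on the recursion relation \eqref{eq:recurrencia}: assuming $[P_{m}(x),f(z)]=[P_{m-1}(x),f(z)]=0$, it reads off $[c_m,f(z)]=[c_{m-1},f(z)]=0$ directly from \eqref{eq:defcn} (using $[\rho(x),f(z)]=0$), and then the three--term recursion forces $[P_{m+1}(x),f(z)]=0$. The base case $m=1$ is trivial because $P_0=\I_N$ and $P_1(z)=z\I_N$. Thus the paper never touches the moment matrix $m^{(n)}$ or its inverse.

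Your approach instead goes through the explicit moment representation \eqref{eq:zzz}: once the moments $m_j$ lie in the commutant $\Cc_f$, the Kronecker--product observation that $[\I_n\otimes f(z),m^{(n)}]=0$ forces every block of $(m^{(n)})^{-1}$ into $\Cc_f$, and the coefficients $\gamma_n^{(k)}$ follow. This is more structural and does not rely on the recursion relation at all, so it would stand even if Proposition~8 had not yet been proved; the price is the extra block--algebra step, which the paper's inductive argument avoids entirely. Both arguments are short, and the only nontrivial point in yours --- the passage to blocks of the inverse --- is handled correctly by the $\I_n\otimes f(z)$ trick.
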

\begin{proof}
Let us suppose that for  a given $m\geq0$ we have
\begin{align}
[P_{m}(x),f(z)]=
[P_{m-1}(x),f(z)]=0.
\label{eq:CM01}
\end{align}
Then, recalling  \eqref{eq:defcn} these expressions  give
\begin{align}
[c_{m},f(z)]=[c_{m-1},f(z)]=0,
\label{eq:CM03}
\end{align}
respectively. Therefore, using the recursion relations \eqref{eq:recurrencia}
and \eqref{eq:cbeta} we obtain
\begin{align*}
 [ P_{m+1}(x),f(z)]=x[P_{m}(x),f(z)]-[c_{m}^{-1}c_{m-1}P_{m-1}(x),f(z)]=0.
\end{align*}
This means that
\begin{align}
[c_{m+1},f(z)]=0.
\label{eq:COM1}
\end{align}
Hypothesis \eqref{eq:CM01} holds for $m$=1, consequently $[c_n,f(z)]=0$ for
$n=0,1,\dots$ and \eqref{eq:cbeta} implies $[\beta_n,f(z)]=0$. Finally, as the
coefficients of the matrix orthogonal $P_n(z)$ are polynomials in the
$\beta$'s we conclude that $[P_n(z'),f(z)]=0$ for all $z,z'\in\C$.
\end{proof}
\begin{cor}
Suppose that $[V(x), V(z)]=0$ for all $x\in\R$ and $z\in\C$, then
  \begin{align}\label{Pc}
    [P_n(z),P_m(z')]&=0,& \forall n,m&\geq 0,& z,z'&\in\C,\\
     [c_n,c_m]&=0,\label{cc}\\
    [\beta_n,\beta_m]&=0. \label{betac}
  \end{align}
\end{cor}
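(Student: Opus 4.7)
The plan is a two-step bootstrap using Proposition \ref{commutativity}. The hypothesis $[V(x), V(z)] = 0$ for all $x \in \R, z \in \C$ is precisely the condition needed to apply Proposition \ref{commutativity} with $f(z) = V(z)$. Doing so immediately yields
\begin{align*}
[c_n, V(z)] = [\beta_n, V(z)] = 0, \qquad [P_n(z'), V(z)] = 0, \qquad \forall\, z, z' \in \C.
\end{align*}
In particular, restricting $z$ to real values, $c_n$, $\beta_n$, and each $P_n(z')$ commute with $V(x)$ for every $x \in \R$.

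Next I would apply Proposition \ref{commutativity} a second time, now with the constant matrix-valued functions $f(w) := c_m$, $f(w) := \beta_m$, and $f(w) := P_m(z')$ (with $z'$ fixed) respectively. Each of these is trivially a function from $\C$ to $\C^{N\times N}$, and the hypothesis $[V(x), f(w)] = 0$ for all $(x,w) \in \R \times \C$ is exactly what was established in the first step. The conclusions of the Proposition then read
\begin{align*}
[c_n, c_m] &= 0, & [\beta_n, \beta_m] &= 0, & [P_n(z''), P_m(z')] &= 0,
\end{align*}
which are the three claims \eqref{Pc}, \eqref{cc}, \eqref{betac}.

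There is essentially no obstacle here; the only thing to verify is that constant $N \times N$ matrices qualify as legitimate choices of $f(z)$ in Proposition \ref{commutativity}, which is immediate from the statement since no regularity is required of $f$. The argument is genuinely a bootstrap: commutativity with $V$ (guaranteed by hypothesis) propagates first to $c_n, \beta_n, P_n$ via the Proposition, and then mutual commutativity among these objects follows by feeding them back in as constant inputs. No new computation is required.
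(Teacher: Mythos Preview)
Your argument is correct and follows essentially the same two-step bootstrap as the paper: apply Proposition~\ref{commutativity} first with $f=V$, then feed the resulting commutants back in. The only cosmetic difference is that the paper takes $f=P_n$ in the second pass and then reads off \eqref{cc} and \eqref{betac} from the definitions \eqref{eq:defcn} and \eqref{eq:cbeta}, whereas you apply the Proposition separately with the constant choices $f=c_m$, $f=\beta_m$, $f=P_m(z')$; both routes are equally valid and equally short.
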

\begin{proof}
 Applying  Proposition \ref{commutativity} to $f=V$ we deduce that  $[P_n(z'),V(z)]=0$, so that it allows to use  again Proposition \ref{commutativity} but now with  $f=P_n$ and get the stated result. From \eqref{eq:defcn} and \eqref{Pc} we deduce \eqref{cc} and using
\eqref{eq:cbeta} we get \eqref{betac}.
\end{proof}

\section{A discrete matrix  equation, related to Freud matrix orthogonal polynomials, with singularity confinement}
 We will consider the particular case when
\begin{align}\label{Freud potential}
  V(z)&=\alpha z^{2}+\I_Nz^4,& \alpha=\alpha^\dag.
\end{align}
Observe that $[V(z),V(z')]=0$ for any pair of complex numbers $z,z'$. Hence, in this case the corresponding set of  matrix orthogonal polynomials $\{P_n\}_{n=0}^\infty$, that we refer as matrix Freud polynomials, is an Abelian set. Moreover, we have
\begin{align*}
  [c_n,c_m]=[\beta_n,\beta_m]=[c_n,\alpha]=[\beta_n,\alpha]&=0, & \forall n,m=0,1,\dots.
\end{align*}

In this situation we have
\begin{theorem}
  The recursion coefficients $\beta_n$ \eqref{eq:cbeta} for the Freud matrix orthogonal
  polynomials determined by \eqref{Freud potential} satisfy
  \begin{align}
\beta_{n+1}&=n\beta_{n}^{-1}-\beta_{n-1}-\beta_{n}-\alpha,& n=1,2,\dots
\label{eq:painleveS}
\end{align}
with $\beta_0:=0$.
\end{theorem}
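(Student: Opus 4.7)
The plan is to exploit that, for $V(z) = \alpha z^2 + \I_N z^4$, $-V'(z) = -2\alpha z - 4z^3$ is cubic, so $M_n(z)$ will be a polynomial of degree $3$ and hence will have no negative power of $z$ in its asymptotic expansion at infinity. From \eqref{eq:RRRRRRRRRRRRRRRR}, writing
\[
M_n(z) = \frac{\d S_n}{\d z}S_n^{-1} + S_n K_n S_n^{-1},
\]
the vanishing of the $z^{-1}$ coefficient of the right-hand side furnishes the key constraint. The derivative piece is $O(z^{-2})$ and contributes nothing; the $\tfrac{n}{z}\diag(\I_N,-\I_N)$ piece of $K_n$ contributes $n\diag(\I_N,-\I_N)$; and the polynomial piece $-V'(z)\diag(\I_N,0)$ of $K_n$, conjugated by $S_n$, contributes $-4X_4 - 2\alpha X_2$, where $X_k := \bigl(S_n\diag(\I_N,0)S_n^{-1}\bigr)\big|_{z^{-k}}$. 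Thus
\begin{equation*}
n\diag(\I_N,-\I_N) - 4X_4 - 2\alpha X_2 = 0,
\end{equation*}
and this is where the factor $n$ on the right-hand side of \eqref{eq:painleveS} will enter.

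To extract the target I will read off the upper-left $N\times N$ block. Using the formulas for $S_n^{(1)}, S_n^{(2)}, S_n^{(3)}$ from the Proposition above, together with the commutativity of all $c_n, \beta_n, \alpha$ (Corollary, applicable since $[V(x), V(z)]=0$ by \eqref{Freud potential}), a direct expansion gives $X_2\big|_{11} = c_n^{-1}c_{n-1} = \beta_n/2$. For $X_4$, rather than computing the as-yet-unknown $S_n^{(4)}$, I will invoke the projector identity $\bigl(S_n\diag(\I_N,0)S_n^{-1}\bigr)^2 = S_n\diag(\I_N,0)S_n^{-1}$. At the $z^{-4}$ level of its $(1,1)$ block, this reduces the calculation to products of $S_n^{(1)}, S_n^{(2)}, S_n^{(3)}$; the telescoping shape of $S_{n,11}^{(2)}, S_{n,22}^{(2)}$ combined with commutativity will then collapse the expression to $X_4\big|_{11} = \beta_n(\beta_{n-1}+\beta_n+\beta_{n+1})/4$. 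Substituting,
\[
n\I_N = 4X_4\big|_{11} + 2\alpha X_2\big|_{11} = \beta_n\bigl(\alpha + \beta_{n-1} + \beta_n + \beta_{n+1}\bigr),
\]
and multiplying by $\beta_n^{-1}$ (using commutativity) gives \eqref{eq:painleveS}.

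The main technical hurdle will be the evaluation of $X_4\big|_{11}$: a direct approach would require $S_n^{(4)}$, not listed in the Proposition, but the projector identity sidesteps this and, together with the telescoping sums and commutativity, delivers the clean cubic expression in the $\beta$'s. An alternative route, starting from the compatibility \eqref{comp} and matching the $z^0$ coefficient of the upper-left block of $M_{n+1}R_n = R_n' + R_nM_n$, would yield only the telescoped difference $F_{n+1}-F_n = \I_N$ for $F_n := \beta_n(\alpha+\beta_{n-1}+\beta_n+\beta_{n+1})$; the $z^{-1}$ argument above is what then pins down the additive constant and delivers \eqref{eq:painleveS}.
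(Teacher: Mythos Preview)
Your argument is correct and takes a genuinely different route from the paper's. The paper computes the polynomial $M_n(z)$ explicitly through \eqref{eq:MMatrix}--\eqref{eq:matriz0}, then invokes the compatibility relation \eqref{comp} to obtain the second-order recursion \eqref{eq:compatibile1}--\eqref{eq:compatibile2}; summing this telescopically produces \eqref{eq:integral} with an undetermined additive constant $k$, which is finally pinned down to $\I_N$ by evaluating $Z_1'(0)=M_1^{(0)}Z_1(0)$ at $z=0$. You instead bypass both the compatibility condition and the constant-fixing step by reading the identity off directly from the vanishing of the $z^{-1}$ coefficient in $M_n=S_n'S_n^{-1}+S_nK_nS_n^{-1}$ (the matrix ``string equation''); the projector trick $(S_nES_n^{-1})^2=S_nES_n^{-1}$ is a clean device for extracting $X_4|_{11}$ without ever needing $S_n^{(4)}$, and one checks from \eqref{eq:uno}--\eqref{eq:t12} that indeed $X_2|_{11}=\tfrac12\beta_n$ and $X_4|_{11}=\tfrac14\beta_n(\beta_{n-1}+\beta_n+\beta_{n+1})$. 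Your method is shorter and delivers \eqref{eq:painleveS} in one stroke, including the base case $n=1$ with $\beta_0=0$ once $S_{1,11}^{(2)}=0$ and $S_{1,21}^{(3)}=0$ from \eqref{eq:uno} are used. The paper's route via \eqref{comp}, on the other hand, is closer to the standard isomonodromic template and exhibits the telescoped identity \eqref{eq:compatibile1} as an intermediate step. Both arguments lean on the commutativity from the Corollary at the point where $\alpha$ is pulled through the blocks of $S_n$ and where the $c_n$'s are reshuffled into the $\beta$'s.
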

\begin{proof}
We compute now the matrix $M_n$, for which we have
\begin{align}
M_{n}(z)=\Bigg[\begin{pmatrix}
-A_{n}(z^2)(2{\alpha}z+4z^{3}{\mathbb I}_{N})\tilde{A}_{n}(z^2) &
-A_{n}(z^{2})(2{\alpha}
+4z^{2}{\mathbb I}_{N})\tilde{B}_{n}(z^2)\\
-C_{n}(z^2)(2{\alpha}+4z^{2}{\mathbb I}_{N})\tilde{A}_{n}(z^2)&
-C_{n}(z^2)(2{\alpha}z^{-1}+
4z{\mathbb I}_{N})\tilde{B}_{n}(z^2)
\end{pmatrix}\Bigg]_{+},
\label{eq:MMatrix}
\end{align}
and is clear that
\begin{align}
M_{n}(z)=M_{n}^{(3)}z^{3}+M_{n}^{(2)}z^{2}+M_{n}^{(1)}z+M_{n}^{(0)},
\label{eq:eme}
\end{align}
with
\begin{align*}
M_{n}^{(3)} &=\begin{pmatrix}
 -4{\mathbb I}_{N} & 0\\
 0 & 0
 \end{pmatrix},
 M_{n}^{(2)} =\begin{pmatrix}
 0 & 4S^{(1)}_{n,12} \\
 -4S^{(1)}_{n,21}  & 0
 \end{pmatrix},
M_{n}^{(1)} =\begin{pmatrix}
 -2\alpha-4S^{(1)}_{n,12}S^{(1)}_{n,21} &  0 \\
  0 & 4S^{(1)}_{n,21}S^{(1)}_{n,12}
 \end{pmatrix},\\
M_{n}^{(0)} &=\begin{pmatrix}
 0   & 2{\alpha}S^{(1)}_{n,12}+4S^{(3)}_{n,12}
+4S^{(1)}_{n,12}(S^{(1)}_{n,21}S^{(1)}_{n,12}-S^{(2)}_{n,22})\\
-2S^{(1)}_{n,21}{\alpha}-4S^{(3)}_{n,21}+
4S^{(1)}_{n,21}S^{(2)}_{n,11}-4S^{(1)}_{n,21}S^{(1)}_{n,12}S^{(1)}_{n,21} & 0
\end{pmatrix}.
\end{align*}
Replacing \eqref{eq:uno}-\eqref{eq:t12} into \eqref{eq:eme}
we get
\begin{align}
M_{n}^{(3)}&=\begin{pmatrix}
 -4{\mathbb I}_{N} & 0\\
 0 & 0
 \end{pmatrix},\quad
 M_{n}^{(2)}=\begin{pmatrix}
 0 & 4c_{n}^{-1} \\
 -4c_{n-1}  & 0
 \end{pmatrix},\quad
 M_{n}^{(1)}=\begin{pmatrix}
 -2\alpha-4 c_{n} ^{-1}c_{n-1} &  0 \\
  0  &    4c_{n-1}c_{n} ^{-1}
 \end{pmatrix},
\label{eq:matrices0}\\
M_{1}^{(0)}&=\begin{pmatrix}
 0   &  4c_{2}^{-1}+ 4c_{1}^{-1}c_{0}c_{1}^{-1}+2{\alpha}c_{1}^{-1}\\[8pt]
  -4c_{0}c_{1}^{-1}c_{0}-2c_{0}\alpha    &  0
\end{pmatrix},
\label{eq:matriz}\\
M_{n}^{(0)}&=\begin{pmatrix}
 0   &  4c_{n+1}^{-1}+ 4c_{n}^{-1}c_{n-1}c_{n}^{-1}
+2{\alpha}c_{n}^{-1}\\
  -4c_{n-2}-4c_{n-1}c_{n}^{-1}c_{n-1}
-2c_{n-1}\alpha  &  0
\end{pmatrix}, & n\geq 2.
\label{eq:matriz0}
\end{align}

The compatibility condition \eqref{comp} together with \eqref{eq:RnRn},
\eqref{eq:eme}, \eqref{eq:matrices0}, \eqref{eq:matriz} and
\eqref{eq:matriz0} gives
\begin{align*}
4(c_{n+2}^{-1}c_{n}+c_{n+1}^{-1}c_{n}c_{n+1}^{-1}c_{n}
-c_{n}^{-1}c_{n-1}c_{n}^{-1}c_{n-1}-c_{n}^{-1}c_{n-2})+2{\alpha}c_{n+1}^{-1}c_{n}
-2c_{n}^{-1}c_{n-1}\alpha={\mathbb I}_{N},
\end{align*}
for $n\geq 2$ and
\begin{align*}
4(c_{3}^{-1}c_{1}+c_{2}^{-1}c_{1}c_{2}^{-1}c_{1}-c_{1}^{-1}c_{0}c_{1}^{-1}c_{0})
+2{\alpha}c_{2}^{-1}c_{1}-2c_{1}^{-1}c_{0}\alpha={\mathbb I}_{N},
\end{align*}
which can be written as
\begin{align}
\beta_{n+2}\beta_{n+1}+\beta_{n+1}^2-\beta_{n}^2-\beta_{n}\beta_{n-1}+
\alpha\beta_{n+1}-\beta_{n}\alpha={\mathbb I}_{N}
\label{eq:compatibile1}
\end{align}
for $n\geq 2$ and
\begin{align}
\beta_{3}\beta_{2}+\beta_{2}^2-\beta_{1}^2+
\alpha\beta_{2}-\beta_{1}\alpha={\mathbb I}_{N},
\label{eq:compatibile2}
\end{align}
 respectively.
Using the Abelian character of the set of $\beta$'s we arrive to
\begin{align}
\beta_{n+2}\beta_{n+1}+\beta_{n+1}^2-\beta_{n}^2-\beta_{n}\beta_{n-1}+
\alpha(\beta_{n+1}-\beta_{n})&={\mathbb I}_{N}, & n&=2,3,\dots,
\label{eq:compatible1}\\
\beta_{3}\beta_{2}+\beta_{2}^2-\beta_{1}^2+
\alpha(\beta_{2}-\beta_{1})&={\mathbb I}_{N}.
\label{eq:compatible2}
\end{align}
Summing up in \eqref{eq:compatible1} from  $i$=2 up to $i$=$n$  we obtain
\begin{align}
\sum_{i=2}^{n}[\beta_{i+2}\beta_{i+1}+\beta_{i+1}^2-\beta_{i}^2
-\beta_{i}\beta_{i-1}+
\alpha(\beta_{i+1}-\beta_{i})]
=\sum_{i=2}^{n}{\mathbb I}_{N},
\label{eq:ccompatible}
\end{align}
and consequently we conclude
\begin{align}
\beta_{n+2}\beta_{n+1}+\beta_{n+1}\beta_{n}+{\beta_{n+1}}^{2}+
\alpha\beta_{n+1}&=n{\mathbb I}_{N}+k,&n&\geq 1,
\label{eq:integral}
\end{align}
where
\begin{align}
  k:=\beta_{2}\beta_{1}+\beta_{3}\beta_{2}
+\beta_{2}^2+\alpha\beta_{2}-{\mathbb I}_{N}=\beta_{2}\beta_{1}+\beta_{1}^2+\beta_{1}\alpha,\label{eq:kappa}
\end{align}
where we have used \eqref{eq:compatible2}. We now proceed to show that
$k$=${\mathbb I}_{N}$. \eqref{eq:derivada2} implies, for $n$=1 and $z=0$,
\begin{align}
Z'_{1}(0)=M_{1}^{(0)}Z_{1}(0),
\label{eq:derivada0}
\end{align}
with $M_{1}^{(0)}$ given in \eqref{eq:matriz}. This leads to
\begin{align}
\begin{pmatrix}
P'_{1}(0)\\
c_{0}P'_{0}(0)
\end{pmatrix}
=M_{1}^{(0)}\begin{pmatrix}
P_{1}(0)\\
c_{0}P_{0}(0)
\end{pmatrix}.
\end{align}
Now, using \eqref{eq:cinco} we deduce that
\begin{align}
\begin{pmatrix}
{\mathbb I}_{N}\\
0
\end{pmatrix}
=M_{1}^{(0)}\begin{pmatrix}
   0\\
c_{0}
\end{pmatrix},
\end{align}
which allows us to immediately deduce that
\begin{align}
\beta_{2}\beta_{1}+\beta_{1}^2+\beta_{1}\alpha={\mathbb I}_{N},
\label{eq:condicioninicial000}
\end{align}
and consequently $k$=${\mathbb I}_{N}$.
Finally, we get
\begin{align}
\beta_{n+2}\beta_{n+1}+\beta_{n+1}\beta_{n}+{\beta_{n+1}}^{2}+
\alpha\beta_{n+1}=n{\mathbb I}_{N}+{\mathbb I}_{N}.
\label{eq:Integral}
\end{align}
Finally, notice that \eqref{eq:condicioninicial000} reads
\begin{align}
\beta_{2}=\beta_{1}^{-1}-\beta_{1}-\alpha.
\label{eq:condicioninicial}
\end{align}
\end{proof}
This theorem ensures that $\beta_1$ fixes $\beta_n$ for all $n\geq 2$, and therefore $\beta_n=\beta_n(\beta_1,\alpha)$.  Moreover, we will see now that the solutions $\beta_n$ not only commute with each other but also that they can be simultaneously
conjugated to lower matrices. This result is relevant in our analysis of the  confinement of singularities.

\subsection{On singularity confinement}

The study of the singularities of the discrete matrix equations
\eqref{eq:painleveS} reveals, as we will show, that they are confined;
i.e. the singularities may appear eventually, however they disappear in few
steps, no more than four. The mentioned singularities in \eqref{eq:painleveS}
appear when for some $n$ the matrix $\beta_n$ is not invertible, that is
$\det\beta_n=0$, and we can not continue with the sequence. However, for a
better understanding of this situation in the discrete case we just request
that $\det\beta_n$ is a small quantity so that $\beta_n^{-1}$ and
$\beta_{n+1}$ exist, but they are very ``large" matrices in some appropriate
sense. To be more precise we will consider a small parameter $\epsilon$ and
suppose that in a given step $m$ of the sequence we have
\begin{align}
\label{eq:Condicioninicial1}
 \beta_{m-1}&=O(1),&
\det\beta_{m-1}&=O(1),
\\
\label{eq:Condicioninicial2}
  \beta_{m}&=O(1),&
\det\beta_{m}&=O(\epsilon^r),
\end{align}
for $\epsilon\to 0$ and  with $r\leq N-1$. In other words, we have the asymptotic expansions
\begin{align}
\beta_{m-1}&=\beta_{m-1,0}+
\beta_{m-1,1}\epsilon+O(\epsilon^2),&\epsilon&\rightarrow0, & \det\beta_{m-1,0}&\neq 0,
\label{eq:betacero}\\
\beta_{m}&=\beta_{m,0}+\beta_{m,1}\epsilon+O(\epsilon^2),&\epsilon&\rightarrow0,& \dim\operatorname{Ran}\beta_{m,0}&=N-r.
\label{eq:betauno}
\end{align}

We now proceed with some preliminar material. In particular we show that we
can restrict the study to the triangular case.
\begin{pro}\label{trian}
Let us suppose that $\beta_1$ and $\alpha$ are simultaneously triangularizable matrices; i.e.,  there exist an invertible matrix $M$ such that $\beta_1=M\phi_1 M^{-1}$ and $\alpha =M\gamma M^{-1}$ with $\phi_1$ and $\gamma$ lower triangular matrices. Then, the solutions $\beta_n$ of \eqref{eq:painleveS} can be written as
  \begin{align*}
    \beta_n&=M\phi_nM^{-1},& n\geq 0,
  \end{align*}
 where $\phi_n$, $n=0,1,\dots$, are lower triangular matrices satisfying
  \begin{align*}
    \phi_{n+1}=n\phi_n^{-1}-\phi_{n-1}-\phi_n-\gamma.
  \end{align*}
  Moreover, let us suppose that for some integer $m$ the matrices
  $\beta_{m+1}$, $\beta_m$ and $\alpha$ are simultaneously triangularizable,
  then all the sequence $\{\beta_n\}_{n=0}^\infty$ is simultaneously
  triangularizable.
\end{pro}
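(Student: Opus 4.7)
Both parts rest on a single algebraic fact: the set of lower triangular matrices in $\C^{N\times N}$ is a unital subalgebra closed under inversion of invertible elements, and conjugation by a fixed invertible $M$ commutes with $+$, $-$, scalar multiplication, matrix product and matrix inversion. The recursion \eqref{eq:painleveS} is built only from these operations, so triangularity in the basis $M$ will propagate through the sequence in either direction, provided the matrices we invert are invertible.

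For the first part, I would set $\phi_n := M^{-1}\beta_n M$ and $\gamma := M^{-1}\alpha M$. Conjugating \eqref{eq:painleveS} by $M$ and using $\phi_n^{-1} = M^{-1}\beta_n^{-1}M$ immediately gives the claimed recursion
\[
\phi_{n+1} = n\phi_n^{-1} - \phi_{n-1} - \phi_n - \gamma.
\]
Lower triangularity of $\phi_n$ then follows by induction: the base cases $\phi_0 = 0$ and $\phi_1$ are lower triangular by hypothesis (consistent with $\phi_2 = \phi_1^{-1} - \phi_1 - \gamma$ coming from \eqref{eq:condicioninicial}), and the inductive step uses only that the right-hand side of the recursion is a polynomial-plus-inverse expression in lower triangular matrices.

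For the second part, conjugate globally by the $M$ that simultaneously triangularizes $\beta_{m+1}$, $\beta_m$, $\alpha$, so that these three matrices are themselves lower triangular. Forward propagation to $\beta_{m+2}, \beta_{m+3}, \dots$ is the induction of the first part started at base index $m$ instead of $1$. For backward propagation, I would solve \eqref{eq:painleveS} at $n = m$ for the earlier index,
\[
\beta_{m-1} = m\,\beta_m^{-1} - \beta_{m+1} - \beta_m - \alpha,
\]
which is again lower triangular; iterating the rearranged recursion at $n = m-1, m-2, \ldots, 1$ yields lower triangularity of $\beta_{m-2}, \ldots, \beta_0$. Because \eqref{eq:painleveS} is three-term, two consecutive $\beta$'s together with $\alpha$ suffice to reconstruct the whole sequence with the same triangular structure.

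The only delicate point, and the genuine obstacle, is invertibility of the $\beta_n$'s at each index where they get inverted: the forward induction needs $\beta_n$ invertible for $n \geq m$, the backward iteration needs it for $n \leq m$. These are precisely the singularities of the recursion, so the proposition is naturally paired with the singularity confinement analysis that follows. One can handle singular indices through the $\epsilon$-regularization set up in \eqref{eq:Condicioninicial1}--\eqref{eq:betauno}: replacing a non-invertible $\beta_n$ by its $\epsilon$-perturbation keeps all inverses defined, the triangular structure is preserved order by order in $\epsilon$, and the confinement statement will then show the triangularization survives the limit $\epsilon \to 0$.
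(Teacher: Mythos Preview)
Your proposal is correct and follows essentially the same route as the paper: conjugate the recursion by $M$, use that lower triangular matrices form a unital subalgebra closed under inversion, and induct forward from $\phi_0,\phi_1$ for the first part and both forward and backward from $\beta_m,\beta_{m+1}$ for the second. The paper's own proof is in fact terser and omits the invertibility discussion you raise; your remark about handling singular indices via the $\epsilon$-perturbation is a useful addition rather than a deviation.
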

\begin{proof}
  In the one hand, from \eqref{eq:painleveS} we conclude that $M^{-1}\beta_2
  M$ is lower triangular and in fact that $\{M^{-1}\beta_n M\}_{n\geq 0}$ is a
  sequence of lower triangular matrices. In the other hand, if for some
  integer $m$ the matrices $\beta_{m+1}$, $\beta_m$ and $\alpha$ are
  simultaneously triangularizable we have
    \begin{align*}
    \beta_{m+1}&=m\beta_m^{-1}-\beta_{m}-\beta_{m-1}-\alpha,\\
     \beta_{m}&=(m-1)\beta_{m-1}^{-1}-\beta_{m-1}-\beta_{m-2}-\alpha,
  \end{align*}
  which implies that $\beta_{m-1},\beta_{m-2}$ are triangularized by the same
  transformation that triangularizes $\beta_{m+1}$, $\beta_m$ and $\alpha$.
\end{proof}

The simultaneous triangularizability can be achieved, for example, when
$[\beta_1,\alpha]=0$, as in this case we can always find an invertible matrix
$M$ such that $\beta_1=M\phi_1M^{-1}$ and $\alpha=M \gamma M^{-1}$ where
$\phi_1$ and $\gamma$ are lower triangular matrices, for example by finding
the Jordan form of these two commuting matrices. This is precisely the
situation for the solutions related with matrix orthogonal
polynomials. Obviously, this is just a sufficient condition.  From now on, and
following Proposition \ref{trian}, we will assume that the simultaneous
triangularizability of $\alpha$ and $\beta_1$ holds and study the case in
where $\alpha$ and all the $\beta$'s are lower triangular matrices. Thus, we
will use the splitting
\begin{align}
\label{eq:betadn}\beta_{n}&=D_{n}+N_{n},\\
\label{eq:alfadn}\alpha&=\alpha_{D}+\alpha_{N},
\end{align}
where $D_{n}=\diag(D_{n;1},\dots, D_{n;N})$ and
$\alpha_{D}=\diag(\alpha_{D,1},\dots,\alpha_{D,N})$ are the diagonal parts of
$\beta_{n}$ and $\alpha$, respectively and $N_{n}$ and $\alpha_{N}$ are the
strictly lower parts of $\beta_{n}$ and $\alpha$, respectively. Then,
\eqref{eq:painleveS} splits into
\begin{align}\label{eq:d+n}
\begin{aligned}
  D_{n+1}+N_{n+1}= {n}D_{n}^{-1}-D_{n-1}-D_{n}-\alpha_{D}\\+
n\bar{N}_{n}-N_{n-1}-N_{n}-\alpha_{N},
\end{aligned}
\end{align}
where $\bar{N}_{n}$ denotes the strictly lower triangular part $\beta_{n}^{-1}$; i.e.,
\begin{align*}
  \beta_{n}^{-1}=D_{n}^{-1}+\bar{N}_{n}.
\end{align*}
Hence, \eqref{eq:painleveS} decouples into
\begin{align}
D_{n+1}= {n}D_{n}^{-1}-D_{n-1}-D_{n}-\alpha_{D},
\label{eq:painlevediag2}\\
N_{n+1}= n\bar{N}_{n}-N_{n-1}-N_{n}-\alpha_{N}.
\label{eq:painlevenil}
\end{align}
In this context it is easy to realize that there always exists a transformation leading to the situation in where
\begin{align}
\label{eq:betauno2}
\beta_{m,0}=\begin{pmatrix}
 0 & 0       & \cdots    & 0 & 0 &\cdots &0\\
0 & 0 & \cdots & 0 & 0 &\cdots&0\\
\vdots & \vdots & & \vdots & \vdots & &\vdots\\
\beta_{m,0;r+1,1} & \beta_{m,0;r+1,2} & \cdots& \beta_{m,0;r+1,r+1}& 0 & \cdots &
0\\
\beta_{m,0;r+2,1} & \beta_{m,0;r+2,2} & \cdots & \beta_{m,0;r+2,r+1} &
\beta_{m,0;r+2,r+2} & \cdots & 0\\ \vdots & \vdots & &
\vdots&\vdots &  &\vdots \\ \beta_{m,0;N,1} &
\beta_{m,0;N,2} & \cdots& \beta_{m,0;N,r+1} & \beta_{m,0;N,r+2} & \cdots
&\beta_{m,0;N,N} \\
\end{pmatrix}.
\end{align}
\begin{pro}\label{conf_diagonal}
  The singularities of the diagonal part are confined. More explicitly, if we assume that \eqref{eq:betacero},
\eqref{eq:betauno} and \eqref{eq:betauno2} hold true  at a given step $m$ then
\begin{align}
D_{m+1;i}&=\frac{m}{\beta_{m,1;i,i}}\epsilon^{-1}
-\beta_{m-1,0;i,i}-\frac{\beta_{m,2;i,i}m}{\beta_{m,1;i,i}^2}-\alpha_{D,i}
+O(\epsilon),\notag\\
D_{m+2;i}&=-\frac{m}{\beta_{m,1;i,i}}\epsilon^{-1}
+\beta_{m-1,0;i,i}+\frac{\beta_{m,2;i,i}m}{\beta_{m,1;i,i}^2}+O(\epsilon),\notag\\
D_{m+3;i}&=-\beta_{m,1;i,i}\frac{m+3}{m}\epsilon+O(\epsilon^2),
\label{eq:d3}\\
D_{m+4;i}&=\frac{m\beta_{m-1,0;i,i}-2\alpha_{D,i}}{m+3}+O(\epsilon),
\label{eq:diagconf}
\end{align}
when $\epsilon\to 0$.
\end{pro}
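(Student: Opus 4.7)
The plan is to invoke Proposition \ref{trian} to pass to the lower-triangular gauge, in which the recursion \eqref{eq:painleveS} decouples diagonally into the scalar equation \eqref{eq:painlevediag2}, namely $D_{n+1;i}=nD_{n;i}^{-1}-D_{n-1;i}-D_{n;i}-\alpha_{D,i}$, which can be iterated independently for each index $i$. The specific form \eqref{eq:betauno2} of $\beta_{m,0}$ forces $\beta_{m,0;i,i}=0$ exactly for $1\le i\le r$ and $\beta_{m,0;i,i}\neq 0$ for $r+1\le i\le N$; in the latter range $D_{m;i}^{-1}=O(1)$ and no singularity develops, so the whole content of the proposition lies in analysing four iterations of the scalar recursion for a ``bad'' index $1\le i\le r$, starting from $D_{m;i}=\beta_{m,1;i,i}\epsilon+\beta_{m,2;i,i}\epsilon^{2}+O(\epsilon^{3})$ and $D_{m-1;i}=\beta_{m-1,0;i,i}+O(\epsilon)$.

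Fix such an $i$ and abbreviate $a:=\beta_{m,1;i,i}$, $b:=\beta_{m,2;i,i}$, $d:=\beta_{m-1,0;i,i}$, $\alpha:=\alpha_{D,i}$. Expanding $D_{m;i}^{-1}$ as a geometric series and substituting into the recursion immediately yields $D_{m+1;i}=(m/a)\epsilon^{-1}+A_{0}+O(\epsilon)$ with $A_{0}=-mb/a^{2}-d-\alpha$, matching the first line of \eqref{eq:d3}. A second iteration, in which one must expand $D_{m+1;i}^{-1}$ through order $\epsilon^{2}$ so that $(m+1)/D_{m+1;i}$ contributes at order $\epsilon$, gives $D_{m+2;i}=-(m/a)\epsilon^{-1}+B_{0}+O(\epsilon)$. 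Two facts should be recorded at this stage: the singular term has flipped sign, and the algebraic identity $B_{0}=-A_{0}-\alpha$ emerges automatically. These are the source of the confinement phenomenon.

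At the third step, in $D_{m+3;i}=(m+2)D_{m+2;i}^{-1}-D_{m+1;i}-D_{m+2;i}-\alpha$, the $\epsilon^{-1}$ parts of $-D_{m+1;i}$ and $-D_{m+2;i}$ cancel by their opposite signs, and the surviving $O(1)$ constants cancel because $A_{0}+B_{0}+\alpha=0$. What remains is $O(\epsilon)$, assembled from the $O(\epsilon)$ piece of $(m+2)/D_{m+2;i}$ together with the $O(\epsilon)$ corrections of $-D_{m+1;i}$ and $-D_{m+2;i}$; collecting these gives $D_{m+3;i}=-((m+3)/m)\,a\epsilon+O(\epsilon^{2})$, i.e.\ \eqref{eq:d3}. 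For the fourth step, the small denominator makes $(m+3)/D_{m+3;i}$ develop an $\epsilon^{-1}$ singularity whose sign is opposite to that of $-D_{m+2;i}$, the two $\epsilon^{-1}$ terms cancel, and the remaining $O(1)$ contributions assemble into \eqref{eq:diagconf}.

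The proof is essentially bookkeeping, but the bookkeeping is delicate: because the leading behaviour of $D_{m+3;i}$ sits one order in $\epsilon$ below that of $D_{m+1;i}$ and $D_{m+2;i}$, each reciprocal $D_{m;i}^{-1},D_{m+1;i}^{-1},D_{m+2;i}^{-1}$ has to be expanded one order further than what the final statement naively demands, and both algebraic cancellations (at $\epsilon^{-1}$ and at $\epsilon^{0}$) in the third step must be verified explicitly. Missing either would yield a wrong coefficient at the confinement step, and this accounting is the only real obstacle in the argument.
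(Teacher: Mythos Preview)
Your proposal is correct and follows essentially the same route as the paper: reduce to the decoupled scalar recursion \eqref{eq:painlevediag2} via Proposition~\ref{trian}, observe that only the indices with $\beta_{m,0;i,i}=0$ are singular, and iterate four times from the data $D_{m;i}=a\epsilon+b\epsilon^{2}+O(\epsilon^{3})$, $D_{m-1;i}=d+O(\epsilon)$. The paper's own proof is in fact nothing more than the sentence ``Substitution of these expressions in \eqref{eq:painlevediag2} leads to the stated formulae'', so your explicit tracking of the two cancellations at step $m+3$ (the $\epsilon^{-1}$ cancellation from the sign flip and the $\epsilon^{0}$ cancellation from $A_{0}+B_{0}+\alpha=0$) is more informative than what the paper supplies. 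One small remark: the paper's proof labels the singular range as $i\ge r+1$ and the regular range as $i\le r$, which is inverted relative to the block structure of \eqref{eq:betauno2}; your identification of the bad indices as $1\le i\le r$ is the correct reading.
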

\begin{proof}
From \eqref{eq:betacero}, \eqref{eq:betauno} and \eqref{eq:betauno2} we deduce
\begin{align*}
 D_{m-1,i}&=\beta_{m-1,0;i,i}+\beta_{m-1,1;i,i}\epsilon+O(\epsilon^2),\\
D_{m,i}&=\beta_{m,1;i,i}\epsilon+O(\epsilon^2),
\end{align*}
for $\epsilon\to 0$, with $i\geq r+1$. Substitution of these expressions in
\eqref{eq:painlevediag2} leads to the stated formulae.  For $i\leq r$ the
coefficients $D_{m-1;i}$ and $D_{m;i}$ are $O(1)$ as $\epsilon\to 0$, thus
they do not vanish, and consequently there is confinement of singularities for
the diagonal part $D_n$.
\end{proof}

In what follows we will consider asymptotic expansions taking values in the set of lower triangular matrices
\begin{align}
  \fT&:=\{T_0+T_1\epsilon+O(\epsilon^2),\; \epsilon\to 0, \quad T_i\in\ft_N  \}, &\ft_N&:=\{T=(T_{i,j})\in\C^{N\times N},\quad X_{i,j}=0 \text{ when $i>j$}\},\label{eq:tipoT}
\end{align}
where $\ft_N$ is the set of lower triangular $N\times N$ matrices. The reader should notice that this set $\fT=\ft_N[[\epsilon]]$ is a subring of the ring of $\C^{N\times N}$-valued asymptotic expansions; in fact is a subring with identity, the matrix $\I_N$. We will use the notation
\begin{align}
T_i&:=\begin{pmatrix}
  T_{i,11} & 0   \\
 T_{i,21}  &  T_{i,22}
\end{pmatrix},& i&\geq 1,
\end{align}
where $T_{i,11}\in\ft_r$, $T_{i,22}\in\ft_{N-r}$ and
$T_{i,21}\in \C^{(N-r)\times r}$.
We consider two sets of matrices determined by \eqref{eq:betauno2}, namely
\begin{align*}
  \fk&:=\Big\{ K_0=\begin{pmatrix}
  0 & 0   \\
 K_{0,21}  &  K_{0,22}
\end{pmatrix},
K_{0,21}\in\C^{(N-r)\times r}, K_{0,22}\in\ft_{N-r}\Big\},\\
 \fl&:=\{ L_{-1}=\begin{pmatrix}
  L_{-1,11} & 0   \\
 L_{-1,21}  &  0
\end{pmatrix},\;   L_{-1,11}\in\ft_{r}, L_{-1,21}\in\C^{(N-r)\times r}\Big\},
 \end{align*}
and the related sets
\begin{align}
  \fK&:=\Big\{K=K_0+K_1\epsilon+O(\epsilon^2)\in\fT,\quad K_0\in\fk\Big\},
\label{eq:tipoK}\\
\fL&:=\Big\{L=L_{-1}\epsilon^{-1}+L_{0}+L_{1}\epsilon+O(\epsilon^2)\in\epsilon^{-1}\fT,\quad L_{-1}\in\fl\Big\},
\label{eq:tipoL}
\end{align}
which fulfill the following important properties.
\begin{pro}\label{pro_ring}
  \begin{enumerate}
    \item Both $\fK$ and $\epsilon\fL$ are subrings of the ring with identity
      $\fT$, however these two subrings have no identity.
    \item If an element $X\in\fK$ is an invertible matrix, then
      $X^{-1}\in\fL$, and reciprocally if $X\in\fL$ is invertible, then
      $X^{-1}\in\fK$.
    \item The subrings $\epsilon\fL$ and $\fK$ are bilateral ideals of $\fT$;
      i.e., $\fL\cdot\fT\subset\fL$, $\fT\cdot\fL\subset\fL$,
      $\fT\cdot\fK\subset\fK$ and $\fK\cdot\fT\subset\fK$.
    \item We have $\fL\cdot\fK\subset\fT$.
  \end{enumerate}
\end{pro}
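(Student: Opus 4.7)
The natural setup is to decompose each $T\in\ft_N$ in blocks aligned with the $r+(N-r)$ split,
\[
T=\begin{pmatrix} T_{11} & 0 \\ T_{21} & T_{22}\end{pmatrix},\qquad T_{11}\in\ft_r,\quad T_{22}\in\ft_{N-r},\quad T_{21}\in\C^{(N-r)\times r},
\]
so that $\fk$ consists exactly of those $T$ with $T_{11}=0$ and $\fl$ consists exactly of those $T$ with $T_{22}=0$. The whole proposition then reduces to elementary block computations together with the central observation
\[
L_{-1}K_0=\begin{pmatrix} L_{-1,11} & 0 \\ L_{-1,21} & 0\end{pmatrix}\begin{pmatrix} 0 & 0 \\ K_{0,21} & K_{0,22}\end{pmatrix}=0,
\]
which expresses the fact that the vanishing-block patterns of $\fl$ and $\fk$ are complementary at the level of products.

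For (1), closure of $\fK$ and $\epsilon\fL$ under $+$ is immediate, and closure under multiplication reduces to the fact that a product of two matrices with zero $(1,1)$-block still has zero $(1,1)$-block, and dually for the $(2,2)$-block; neither subring admits an identity because $\I_N$ lies neither in $\fk$ nor in $\fl$. For (3) the same block computations show that for any $T\in\fT$ and $K\in\fK$ both $T_0K_0$ and $K_0T_0$ again have zero $(1,1)$-block and so belong to $\fk$, and dually for $L_{-1}\in\fl$. For (4), expanding
\[
LK=\epsilon^{-1}L_{-1}K_0+\bigl(L_{-1}K_1+L_0K_0\bigr)+O(\epsilon),
\]
the pole coefficient is killed by the central observation, so $LK\in\fT$.

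Only (2) requires a slightly finer argument. I would use the block-inverse formula for lower triangular matrices,
\[
X=\begin{pmatrix} X_{11} & 0 \\ X_{21} & X_{22}\end{pmatrix}\Longrightarrow X^{-1}=\begin{pmatrix} X_{11}^{-1} & 0 \\ -X_{22}^{-1}X_{21}X_{11}^{-1} & X_{22}^{-1}\end{pmatrix}.
\]
For $X\in\fK$ the block $X_{11}$ starts at order $\epsilon$ and $X_{22}$ at order $1$; invertibility of $X$ forces the leading coefficients of these blocks to be invertible in $\ft_r$ and $\ft_{N-r}$ respectively, whence $X_{11}^{-1}$ starts at order $\epsilon^{-1}$ and $X_{22}^{-1}$ at order $1$. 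Reading off the leading term of the displayed inverse gives $X^{-1}=\epsilon^{-1}L_{-1}+O(1)$ with $L_{-1,22}=0$, i.e.\ $L_{-1}\in\fl$; the converse, starting from $X\in\fL$, is the exact dual computation. The only genuinely delicate point in the whole proposition is thus to make the precise sense of ``$X$ invertible'' strong enough in (2), which in the singularity-confinement setup is automatic from $\dim\operatorname{Ran}\beta_{m,0}=N-r$ together with $\det\beta_{m-1,0}\neq0$.
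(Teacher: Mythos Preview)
The paper states this proposition without proof, so there is nothing to compare against; your argument supplies exactly the elementary block computations that the authors evidently regarded as routine. Your central observation $L_{-1}K_0=0$ is the heart of the matter and immediately yields (4), while the block--multiplication checks for (1) and (3) are straightforward and correct.

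Your treatment of (2) is also correct, and you rightly flag the only genuine subtlety: the bare phrase ``$X\in\fK$ is an invertible matrix'' does not by itself force $X_{11}^{-1}$ to have a pole of order exactly one (e.g.\ $X_{11}$ could start at order $\epsilon^2$, giving $X^{-1}\notin\fL$). In the paper's application this never arises, because the only elements of $\fK$ that get inverted are $\beta_m$, $\beta_{m+1}^{-1}\beta_{m+1}=\I_N$--type combinations, and $\beta_{m+3}$, and for those the hypotheses \eqref{eq:betauno}, \eqref{eq:betauno2} together with the explicit diagonal asymptotics of Proposition~\ref{conf_diagonal} guarantee that the $(1,1)$ block really is $\epsilon$ times an invertible element of $\ft_r$. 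Your closing remark acknowledges precisely this, so the proof stands as written; one could tighten the statement of (2) by adding the hypothesis that the order-$\epsilon$ coefficient of the $(1,1)$ block is invertible in $\ft_r$ (and dually for $\fL$), but in the context of the paper this is always satisfied.
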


\begin{theorem}
  If $\beta_1$ and $\alpha$ are simultaneously triangularizable matrices then the singularities of \eqref{eq:painleveS} are confined. More explicitly,  if for a  given step $m$ the conditions \eqref{eq:betacero},  \eqref{eq:betauno} and \eqref{eq:betauno2} are satisfied
  then
  \begin{align*}
    \beta_{m+1},\beta_{m+2}&\in\fL, &\beta_{m+3}&\in\fK, &\beta_{m+4}&\in\fT,
    &\det\beta_{m+4}&=O(1),\quad \epsilon\to 0.
  \end{align*}
\end{theorem}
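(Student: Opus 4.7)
By Proposition \ref{trian}, the simultaneous triangularizability of $\beta_1$ and $\alpha$ allows us to assume without loss of generality that $\alpha$ and every $\beta_n$ is lower triangular; the recursion \eqref{eq:painleveS} then evolves in $\ft_N$. The hypotheses \eqref{eq:betacero}, \eqref{eq:betauno}, \eqref{eq:betauno2} translate to $\beta_{m-1}\in\fT$ being invertible at leading order and $\beta_m\in\fK$.

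The plan is to iterate \eqref{eq:painleveS} four times, alternately applying Proposition \ref{pro_ring}(2) (inversion swaps $\fK$ and $\fL$) with parts (3)--(4) (closure under addition). From $\beta_m\in\fK$ we get $\beta_m^{-1}\in\fL$, hence
\begin{align*}
\beta_{m+1}=m\beta_m^{-1}-\beta_{m-1}-\beta_m-\alpha\in\fL
\end{align*}
with nontrivial $\epsilon^{-1}$-leading coefficient in $\fl$, namely $m\beta_m^{-1,(-1)}$. Inverting, $\beta_{m+1}^{-1}\in\fK$; then in the recursion for $\beta_{m+2}$ only the term $-\beta_{m+1}$ contributes at order $\epsilon^{-1}$, so $\beta_{m+2}\in\fL$ with $\epsilon^{-1}$-leading coefficient the negative of that of $\beta_{m+1}$. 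Inverting once more, $\beta_{m+2}^{-1}\in\fK$, and in
\begin{align*}
\beta_{m+3}=(m+2)\beta_{m+2}^{-1}-\beta_{m+1}-\beta_{m+2}-\alpha
\end{align*}
the singular parts of $\beta_{m+1}$ and $\beta_{m+2}$ cancel, placing $\beta_{m+3}\in\fT$.

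To upgrade $\beta_{m+3}\in\fT$ to $\beta_{m+3}\in\fK$ I would check that the top-left $r\times r$ block of the $\epsilon^0$ coefficient of $\beta_{m+3}$ vanishes. Tracking this block through the recursions for $\beta_{m+1}$ and $\beta_{m+2}$, and using that the $\epsilon^0$-leading terms of $\beta_m$, $\beta_{m+1}^{-1}$, $\beta_{m+2}^{-1}$ all lie in $\fk$ (so their top-left blocks vanish), a short computation gives
\begin{align*}
(\beta_{m+1})_{11}^{(0)}+(\beta_{m+2})_{11}^{(0)}=-\alpha_{11},
\end{align*}
whence the top-left of the $\epsilon^0$ coefficient of $\beta_{m+3}$ equals $0-(-\alpha_{11})-\alpha_{11}=0$, so $\beta_{m+3}\in\fK$.

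A final application of Proposition \ref{pro_ring}(2) gives $\beta_{m+3}^{-1}\in\fL$. In $\beta_{m+4}=(m+3)\beta_{m+3}^{-1}-\beta_{m+2}-\beta_{m+3}-\alpha$ the only $\epsilon^{-1}$ contributions come from $(m+3)\beta_{m+3}^{-1}$ and $-\beta_{m+2}$; their cancellation is the main obstacle and forces the identity $\beta_{m+3}^{-1,(-1)}=-\frac{m}{m+3}\beta_m^{-1,(-1)}$. I would verify it by reading the $\epsilon^1$ coefficient of the top-left block of $\beta_{m+3}$ through the recursion (a calculation parallel to the $\epsilon^0$ one above, now producing the prefactor $-\frac{m+3}{m}$), after which the defining identity $\beta_{m+3}\beta_{m+3}^{-1}=\I_{N}$ yields the required formula; the diagonal case of this identity matches Proposition \ref{conf_diagonal} as a consistency check. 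Once the cancellation is established, $\beta_{m+4}\in\fT$; and since $\beta_{m+4}$ is lower triangular, its determinant equals the product of its diagonal entries, each $O(1)$ by Proposition \ref{conf_diagonal}, so $\det\beta_{m+4}=O(1)$.
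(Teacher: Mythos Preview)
Your argument through $\beta_{m+3}\in\fK$ is correct and essentially matches the paper; your block identity $(\beta_{m+1})_{11}^{(0)}+(\beta_{m+2})_{11}^{(0)}=-\alpha_{11}$ is the top-left reading of the telescoped formula $\beta_{m+3}=\beta_m-(m+1)\beta_{m+1}^{-1}+(m+2)\beta_{m+2}^{-1}$ used in the paper, which exhibits $\beta_{m+3}$ directly as a sum of three elements of $\fK$.

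The gap is in the final step. The cancellation you need, $\beta_{m+3}^{-1,(-1)}=-\tfrac{m}{m+3}\beta_m^{-1,(-1)}$, is a full $N\times N$ identity in $\fl$, but you only propose to compute the $\epsilon^1$ coefficient of the \emph{top-left} block of $\beta_{m+3}$. Feeding that into $\beta_{m+3}\beta_{m+3}^{-1}=\I_N$ determines $(\beta_{m+3}^{-1,(-1)})_{11}$, yet the bottom-left block is $(\beta_{m+3}^{-1,(-1)})_{21}=-(\beta_{m+3}^{(0)})_{22}^{-1}(\beta_{m+3}^{(0)})_{21}(\beta_{m+3}^{-1,(-1)})_{11}$, so matching it against $-\tfrac{m}{m+3}(\beta_m^{-1,(-1)})_{21}$ forces the further identity $(\beta_{m+3}^{(0)})_{22}^{-1}(\beta_{m+3}^{(0)})_{21}=(\beta_{m,0})_{22}^{-1}(\beta_{m,0})_{21}$. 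This requires tracking the $\epsilon^0$ coefficients of the bottom blocks of $\beta_{m+1}^{-1}$ and $\beta_{m+2}^{-1}$ through two inversions of elements of $\fL$; it is not ``parallel'' to your $\epsilon^0$ computation (which used only the structural vanishing of the $11$-block in $\fk$), and you do not carry it out.

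The paper sidesteps all coefficient chasing. It writes $\beta_{m+4}=\beta_{m+3}^{-1}A+O(1)$ with $A:=(m+3)\I-\beta_{m+3}\beta_{m+2}$, and then, using only the recursion \eqref{eq:painleveS}, rewrites
\[
A=k-\beta_m(\beta_m+\beta_{m-1}+\alpha),\qquad k:=X(X-\alpha),\quad X:=(m+1)\beta_{m+1}^{-1}-\beta_m.
\]
Since $X,\beta_m\in\fK$ and $\fK$ is an ideal of $\fT$, one has $k\in\fK$ and $\beta_m(\cdots)\in\fK$, hence $A\in\fK$; then Proposition~\ref{pro_ring}(4) gives $\beta_{m+3}^{-1}A\in\fL\cdot\fK\subset\fT$, so $\beta_{m+4}\in\fT$ in one stroke.
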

\begin{proof}
From \eqref{eq:betauno} and \eqref{eq:betauno2} we conclude that
$\beta_m\in\fK$ and consequently $\beta_m^{-1}\in\fL$. Taking into account
this fact, \eqref{eq:painleveS} implies that $\beta_{m+1}\in\fL$. Therefore,
$\beta_{m+1}^{-1}\in\fK$ and \eqref{eq:painleveS}, as $\beta_{m+1}\in\fL$,
give $\beta_{m+2}\in\fL$ and consequently $\beta_{m+2}^{-1}\in\fK$.  Iterating
\eqref{eq:painleveS} we get
\begin{align}
\beta_{m+3}=\beta_{m}-(m+1)\beta_{m+1}^{-1}+(m+2)\beta_{m+2}^{-1}.
\label{eq:painleves}
\end{align}
Using the just derived facts, $\beta_{m+1}^{-1},\beta_{m+2}^{-1}\in\fK$, and
that $\beta_m\in\fK$, we deduce $\beta_{m+3}\in\fK$ which implies
$\beta_{m+3}^{-1}\in\fL$.  Finally, \eqref{eq:painleveS} gives $\beta_{m+4}$
as
\begin{align}
\beta_{m+4}=(m+3)\beta_{m+3}^{-1}-\beta_{m+2}-\beta_{m+3}-\alpha.
\label{eq:painleveS4}
\end{align}
 We conclude that there are only two possibilities:
\begin{enumerate}
  \item $\beta_{m+4}=O(1)$ for $\epsilon\to 0$, or
  \item $\beta_{m+4}\in\fL$.
\end{enumerate}
Let us consider both possibilities separately.
\begin{enumerate}
  \item Recalling that  the diagonal part has singularity confinement, see Proposition \ref{conf_diagonal},
in the first case we see that $\det\beta_{m+4}=O(1)$ when $\epsilon\to 0$, as desired.
\item In this second case we write $\beta_{m+4}$ as
\begin{align}
\beta_{m+4}&=\beta_{m+3}^{-1}A+O(1),&\epsilon&\rightarrow0,& A:=(m+3){\mathbb I}-\beta_{m+3}\beta_{m+2}.
\label{eq:confi}
\end{align}
Observe that repeated use of \eqref{eq:painleveS} leads to the following
expressions:
\begin{align*}
  A&={\mathbb I}+[(m+1)\beta_{m+1}^{-1}-\beta_{m}]\beta_{m+2}\\
  &=k+{\mathbb I}-[(m+1)\beta_{m+1}^{-1}
-\beta_{m}]\beta_{m+1}\\&=k-m{\mathbb I}+\beta_{m}\beta_{m+1}\\
&=k-\beta_{m}(\beta_{m}
+\beta_{m-1}+\alpha),
\end{align*}
where
\begin{align*}
  k:=[(m+1)\beta_{m+1}^{-1}
-\beta_{m}][(m+1)\beta_{m+1}^{-1}-\beta_{m}-\alpha].
\end{align*}
From these formulae, as $\beta_{m+1}^{-1},\beta_m\in\fK$ we deduce that $k\in \fK$ and also that $\beta_{m}(\beta_{m}
+\beta_{m-1}+\alpha)\in\fK$. Hence, we conclude that $A\in\fK$ and from \eqref{eq:confi} and Proposition \ref{pro_ring} we deduce that $\beta_{m+4}=O(1)$ when $\epsilon\to0$. Consequently, we arrive to a contradiction, and only possibility 1) remains.
\end{enumerate}

\end{proof}

\section*{Acknowledgements}

The authors thanks economical support from the Spanish Ministerio de Ciencia e
Innovaci\'{o}n, research project FIS2008-00200. GAC acknowledges the support
of the grant Universidad Complutense de Madrid. Finally, MM reckons
illuminating discussions with Dr. Mattia Cafasso in relation with
orthogonality and singularity confinement, and both authors are grateful to
Prof. Gabriel \'{A}lvarez Galindo for several discussions and for the
experimental confirmation, via \textsc{Mathematica}, of the existence of the
confinement of singularities in the $2\times 2$ case.

\end{document}